\newtheorem{theorem}{Theorem}[section]
\newtheorem{corollary}[theorem]{Corollary}
\newtheorem{definition}[theorem]{Definition}
\newtheorem{lemma}[theorem]{Lemma}
\newtheorem{proposition}[theorem]{Proposition}
\newtheorem{remark}[theorem]{Remark}
\newtheorem{hypothesis}[theorem]{Hypothesis}
\let\Section=\section
\def\section{\setcounter{equation}{0}\Section}
\newenvironment{proof}[1][Proof]{\textbf{#1.} }{\ \rule{0.5em}{0.5em}}
\newcommand{\R}{\mathbb{R}}
\def\RR{\mathbb{R}}
\def\EE{\mathbb{E}}
\def \eref#1{\hbox{(\ref{#1})}}
\begin{document}

\title{On the intermittency front of stochastic heat equation driven by colored noises }

\author{Yaozhong {\sc Hu}\thanks{Y.  Hu is
partially supported by a grant from the Simons Foundation
\#209206.}, \  Jingyu {\sc Huang} \ and \  David
{\sc Nualart}\thanks{ D. Nualart is supported by the
NSF grant DMS1208625 and the ARO grant  FED0070445. \newline
  Keywords:   Stochastic heat equation, Feynman-Kac formula, Intermittency front,  Malliavin calculus,   comparison principle.   }  \\
Department of Mathematics \\
University of Kansas \\
Lawrence, Kansas, 66045 USA}
%\date{June 28, 2013}
\date{}
\maketitle

\begin{abstract}
We study the propagation of  high peaks (intermittency front) of the solution to a stochastic heat equation driven by multiplicative centered Gaussian noise in $\RR^d$. The noise is assumed to have a general homogeneous covariance in both time and space, and the solution is interpreted in the senses of the Wick product.  We give some estimates for the upper and lower bounds of the propagation speed, based on a moment formula of the solution. When the space covariance is given by a Riesz kernel, we give  more precise bounds for the propagation speed. 
\end{abstract}

\section{Introduction}
We consider the stochastic heat equation in $\RR^d$ driven by a general multiplicative centered Gaussian noise (parabolic Anderson model)
\begin{equation}\label{SHE}
\frac{\partial u}{\partial t} = \frac{1}{2}\Delta u + \lambda u   \diamond \dot {W}\,,
\end{equation}
with a continuous and nonnegative initial condition $u_0$ of compact support. The covariance of the noise $\dot W$ can be informally written as 
\begin{equation*}
\EE \left [ \dot{W}_{t,x} \dot{W}_{s,y}\right] = \gamma(s-t) \Lambda(x-y)\,,
\end{equation*}
and the product appearing in \eref{SHE} is interpreted in the  Wick sense.

In this paper we are interested in the position of the high peaks that are farthest away from the origin. The propagation of the farthest high peaks was  first considered by Conus and Khoshnevisan in \cite{CK} for a one dimensional heat equation driven by space-time white noise, where it is shown that  there are intermittency fronts that move linearly with time as $\alpha t$. Namely, for any fixed $p \in [2, \infty)$, if $\alpha$ is sufficiently small, then the quantity $\sup_{|x|> \alpha t}\EE (|u(t,x)|^p)$ grows exponentially fast as $t $ tends to $\infty$; whereas the preceding quantity vanishes exponentially fast  if $\alpha$ is sufficiently large. To be more precise, the authors of   \cite{CK}  define for every $\alpha > 0$, 
\begin{equation}\label{linear growth index}
\mathscr{S}(\alpha):= \limsup_{t \to \infty} \frac{1}{t} \sup_{|x|> \alpha t} \log \EE (|u(t,x)|^p)\,,
\end{equation}
and think of $\alpha_L$ as an intermittency lower front if $\mathscr{S}(\alpha) < 0$ for all $\alpha > \alpha_L$, and of $\alpha_U$ as an intermittency upper front if $\mathscr{S}(\alpha)>0$ whenever $\alpha < \alpha_U$. In \cite{CK} it is shown that for each real number $p \geq 2$, $ 0 < \alpha_U \leq \alpha_L < \infty$, and when $p=2$, some  bounds for $\alpha_L$ and $\alpha_U$ are given. In a later work by Chen and Dalang \cite{CD}, it is proved that when $p=2$, there exists a critical number $\alpha^* =\frac{\lambda^2}{2}$ such that $\mathscr{S} (\alpha) < 0$ when $\alpha> \alpha^*$ while $\mathscr{S}(\alpha) > 0$ when $\alpha < \alpha^*$ (this property was first conjectured in \cite{CK}). 
See also \cite{Kho} for a discussion of these facts.    

This paper is inspired by the aforementioned works. We are  interested in  the  multidimensional stochastic heat equation driven  by a colored noise, both in space and time, when the solution is interpreted in the Wick sense.  Our analysis will be based on the $p$th moment formula and Wiener chaos expansion of the solution,  obtained in \cite{HHNT}, as well as some small ball estimates.  Due to the presence of the time covariance, the propagation speed of the farthest high peaks may not be linear. Thus,  in contrast to \eref{linear growth index}, the inequality  $|x|> \alpha t$ there needs to be replaced by $|x|> \alpha t\theta_t$ for some suitable function $\theta_t$ (see Theorems \ref{thm:cor upper} and \ref{thm:cor lower} below for precise choice of $\theta_t$). When $\Lambda$ is the Riesz kernel, a better estimate of the intermittency lower front is obtained in Proposition \ref{prop: Riesz upper}. We would like to mention the work \cite{CD1}, where another nonlinear propagation speed (growth indices of exponential type) is studied. 

This paper is organized as follows: In Section 2, we set up some preliminaries for the structure of our Gaussian noises in equation \eref{SHE} and present some elements of  Malliavin calculus. We also prove the non-negativity of the solution to equation \eref{SHE}. Section 3 contains the main results of this paper, where we obtain some upper and lower bounds for the growth index. In the special case when the space covariance is a Riesz kernel, we give a more detailed computation for the upper bound of the growth index, and we see that the orders of $\lambda$ and $p$ in the estimate of the growth index are sharp.

\section{Preliminaries}

We first introduce some basic notions.   The Fourier
transform is defined with the normalization
\[ \mathcal{F}u ( \xi)  = \int_{\mathbb{R}^d} e^{- \imath \langle
   \xi, x \rangle} u ( x) d x, \]
so that the inverse Fourier transform is given by $\mathcal{F}^{- 1} u ( \xi)
= ( 2 \pi)^{- d} \mathcal{F}u ( - \xi)$. We denote by  $\mathcal{D}((0,\infty)\times \RR^{d})$ the space of infinitely differentiable functions with compact support on  $(0,\infty)\times \RR^{d}$.

\smallskip
On a complete probability space
$(\Omega,\mathcal{F},P)$ we consider a Gaussian noise $W$ encoded by a
centered Gaussian family $\{W(\varphi) , \, \varphi\in
\mathcal{D}((0,\infty)\times \RR^{d})\}$, whose covariance structure
is given by
\begin{equation}\label{cov}
\EE\left [ W(\varphi) \, W(\psi) \right]
= \int_{\RR_{+}^{2}\times\RR^{2d}}
\varphi(s,x)\psi(t,y)\gamma(s-t)\Lambda(x-y)dxdydsdt,
\end{equation}
where $\gamma: \RR \rightarrow \RR_+$ and $\Lambda: \RR^d \rightarrow
\RR_+$ are  non-negative definite
functions. We also assume that  the Fourier transform $\mathcal{F}\Lambda=\mu$ is a tempered
measure, that is, there is an integer   $m \geq 1$ such that
$\int_{\RR^d}(1+|\xi|^2)^{-m}\mu(d\xi)< \infty$. Our results also cover the case where $\gamma$  (or $\Lambda$ if $d=1$) is the Dirac  delta function, which corresponds to the time (or space) white noise.

\smallskip

 Let $\mathcal{H}$  be the completion of
$\mathcal{D}((0,\infty)\times\RR^d)$
endowed with the inner product
\begin{eqnarray}\label{innprod}
\langle \varphi , \psi \rangle_{\mathcal{H}}&=&
\int_{\RR_{+}^{2}\times\RR^{2d}}
\varphi(s,x)\psi(t,y)\gamma(s-t)\Lambda(x-y) \, dxdydsdt\\ \notag
&=&\frac{1}{(2\pi)^d}\int_{\RR_{+}^{2}\times\RR^{d}}  \mathcal{F} \varphi(s,\xi) \overline{ \mathcal{F} \psi(t,\xi)}\gamma(s-t) \mu(d\xi) \, dsdt,
\end{eqnarray}
where $\mathcal{F} \varphi$ refers to the Fourier transform with respect to the space variable only.
 The mapping $\varphi \rightarrow W(\varphi)$ defined in $\mathcal{D}((0,\infty)\times\RR^d)$  extends to a linear isometry between
$\mathcal{H}$  and the Gaussian space
spanned by $W$. We will denote this isometry by
\begin{equation*}
W(\phi)=\int_0^{\infty}\int_{\RR^d}\phi(t,x)W(dt,dx)
\end{equation*}
for $\phi \in \mathcal{H}$.
Notice that if $\phi$ and $\psi$ are in
$\mathcal{H}$, then
$\EE \left[ W(\phi)W(\psi)\right] =\langle\phi,\psi\rangle_{\mathcal{H}}$.

\smallskip

We shall make a standard assumption on the spectral measure $\mu$, which will prevail until the end of the paper.
\begin{hypothesis}\label{hyp:mu}
The measure $\mu$ satisfies the following integrability condition:
\begin{equation} \label{mu1}
\int_{\RR^{d}}\frac{\mu(d\xi)}{1+|\xi|^{2}} <\infty.
\end{equation}
\end{hypothesis}

Now we state some basic facts about Malliavin calculus. For a detailed account on this theory, we refer to \cite{Nua}.
We will denote by $D$ the Malliavin derivative. That is, if $F$ is a smooth and cylindrical
random variable of the form
\begin{equation*}
F=f(W(\phi_1),\dots,W(\phi_n))\,,
\end{equation*}
with $\phi_i \in \mathcal{H}$, $f \in C^{\infty}_p (\RR^n)$ (namely, $f$ and all
its partial derivatives have polynomial growth), then $DF$ is the
$\mathcal{H}$-valued random variable defined by
\begin{equation*}
DF=\sum_{j=1}^n\frac{\partial f}{\partial
x_j}(W(\phi_1),\dots,W(\phi_n))\phi_j\,.
\end{equation*}
The operator $D$ is closable from $L^p(\Omega)$ into $L^p(\Omega;
\mathcal{H})$  for any $p\ge 1$ and we define the Sobolev space $\mathbb{D}^{1,p}$ as
the closure of the space of smooth and cylindrical random variables
under the norm
\[
\|F\|_{1,p}=\left(\EE[ |F|^p]+\EE[\|DF\|^p_{\mathcal{H}}]\right)^{\frac{1}{p}}\,.
\]
We denote by $\delta$ the adjoint of the derivative operator given
by the duality formula
\begin{equation}\label{dual}
\EE \left[ \delta (u)F \right] =\EE \left[ \langle DF,u
\rangle_{\mathcal{H}}\right] ,
\end{equation}
for any $F \in \mathbb{D}^{1,2}$ and any element $u \in L^2(\Omega;
\mathcal{H})$ in the domain of $\delta$. The operator $\delta$ is
also called the {\it Skorohod integral} because in the case of the
Brownian motion, it coincides with an extension of the It\^o
integral introduced by Skorohod.    We will make use of the notation
\[
\delta(u) =\int_0^t \int_{\mathbb{R}^d}  u(s,y) \delta W_{s,y}.
\]

If $F\in \mathbb{D}^{1,2}$ and $\phi$ is an element of $%
\mathcal{H}$, then $F\phi$ is Skorohod
integrable and, by definition, the
Wick product equals the Skorohod integral of $Fh$, that is, 
\begin{equation}
\delta (F\phi)=F\diamond W(\phi).  \label{Wick}
\end{equation}%
In view of this definition, the mild solution to equation  (\ref{SHE}) will be formulated  below  in terms of the Skorohod integral.

Next we give a short account of Wiener chaos expansion. For any integer $n\geq 0$ we denote by $\mathbf{H}_n$ the $n$th Wiener chaos of $W$. We recall that $\mathbf{H}_0$ is simply  $\RR$ and for $n\ge 1$, $\mathbf {H}_n$ is the closed linear subspace of $L^2(\Omega)$ generated by the random variables $\{ H_n(W(h)), h \in \mathcal{H}, \|h\|_{\mathcal{H}}=1 \}$, where $H_n$ is the $n$th Hermite polynomial. Then we will have the orthogonal decomposition
\begin{equation}
L^2(\Omega)=\oplus _{n=0}^{\infty} \mathbf{H}_n\,.
\end{equation}
For each $n \geq 0$, we will denote by $J_n$ the orthogonal projection on the $n$th Wiener chaos. Consider the one-parameter semigroup $\{T_t, t\geq 0\}$ of contraction operators on $L^2(\Omega)$ defined by
\begin{equation}
T_t(F)=\sum_{n=0}^{\infty} e^{-nt}J_n F\,,
\end{equation}
which is called the Ornstein-Uhlenbeck semigroup. The following property of $T_t$ is taken from \cite{Nua1}.

\begin{proposition}\label{prop: OU semi}
For any $p > 1$, if $F \in L^p (\Omega)$, then $T_t F \in \mathbb{D}^{1,p}$ for any $t >0$ and we also have
\begin{equation}
\lim_{t \to 0} \|T_t F -F\|_{1,p}=0\,.
\end{equation}
\end{proposition}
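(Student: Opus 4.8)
Write $L=\sum_{n\ge 0}(-n)J_n$ for the Ornstein--Uhlenbeck generator, so that $T_t=e^{tL}$, and let $\{W'(\varphi)\}_{\varphi\in\mathcal H}$ be an independent copy of $W$, carried by an auxiliary probability space $(\Omega',P')$. The backbone of the proof is Mehler's formula,
\begin{equation*}
T_tF(\omega)=\int_{\Omega'}F\!\left(e^{-t}\omega+\sqrt{1-e^{-2t}}\,\omega'\right)P'(d\omega'),\qquad F\in L^p(\Omega),
\end{equation*}
which one checks on smooth cylindrical functionals and extends by density; by Jensen's inequality it gives at once that $T_t$ is a contraction on $L^p(\Omega)$, and the same formula, applied coordinatewise in an orthonormal basis of $\mathcal H$, makes $T_t$ a strongly continuous contraction semigroup on $L^p(\Omega;\mathcal H)$. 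A second ingredient is the commutation relation $DT_tF=e^{-t}T_t(DF)$: it is immediate for smooth cylindrical $F$ by differentiating Mehler's formula, and it extends to every $F\in\mathbb D^{1,p}$ because $D$ is closed and, whenever $F_n\to F$ in $\mathbb D^{1,p}$ with $F_n$ smooth cylindrical, one has $T_tF_n\to T_tF$ in $L^p(\Omega)$ and $T_t(DF_n)\to T_t(DF)$ in $L^p(\Omega;\mathcal H)$.

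For the first assertion fix $t>0$ and $p>1$, let $F\in L^p(\Omega)$, and pick smooth cylindrical $F_n\to F$ in $L^p(\Omega)$. Since $T_t$ is a contraction it is enough to bound $\|DT_tF_n\|_{L^p(\Omega;\mathcal H)}$. By the relevant half of Meyer's inequalities, $\|DG\|_{L^p(\Omega;\mathcal H)}\le c_p\,\|(I-L)^{1/2}G\|_p$ for every $G\in\mathbb D^{1,p}$; taking $G=T_tF_n$ and noting that $(I-L)^{1/2}T_t=\phi_t(-L)$ with $\phi_t(\lambda)=(1+\lambda)^{1/2}e^{-t\lambda}$, we are reduced to the $L^p(\Omega)$-boundedness of $\phi_t(-L)$. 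Now $\phi_t$ extends to a bounded analytic function on a sector $\{|\arg\lambda|<\theta\}$, $\theta<\pi/2$, since there $|e^{-t\lambda}|\le e^{-t|\lambda|\cos\theta}$, so Meyer's multiplier theorem yields $\|\phi_t(-L)\|_{L^p\to L^p}=:C_{p,t}<\infty$. Consequently $\|DT_tF_n\|_{L^p(\Omega;\mathcal H)}\le c_pC_{p,t}\|F_n\|_p$, and applying the same bound to $F_n-F_m$ shows $(DT_tF_n)$ is Cauchy in $L^p(\Omega;\mathcal H)$; since $T_tF_n\to T_tF$ in $L^p(\Omega)$ and $D$ is closed, we conclude $T_tF\in\mathbb D^{1,p}$, with $\|T_tF\|_{1,p}\le C\|F\|_p$ for a finite constant $C$ depending only on $p$ and $t$.

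For the limit assertion one must assume $F\in\mathbb D^{1,p}$ (otherwise $\|T_tF-F\|_{1,p}=\infty$). On the one hand $\|T_tF-F\|_p\to 0$ as $t\to 0$, since $T_t$ is strongly continuous on $L^p(\Omega)$ by Mehler's formula and dominated convergence (after reducing to a dense class). On the other hand, by the commutation relation,
\begin{equation*}
\|DT_tF-DF\|_{L^p(\Omega;\mathcal H)}\le e^{-t}\,\|T_t(DF)-DF\|_{L^p(\Omega;\mathcal H)}+(1-e^{-t})\,\|DF\|_{L^p(\Omega;\mathcal H)},
\end{equation*}
and both terms tend to $0$ as $t\to 0$, the first by strong continuity of $T_t$ on $L^p(\Omega;\mathcal H)$ and the second trivially; since $\|T_tF-F\|_{1,p}^p=\EE[|T_tF-F|^p]+\EE[\|DT_tF-DF\|_{\mathcal H}^p]$, the claim follows. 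The only genuinely hard input is the $L^p$-boundedness of $DT_t$ for $p\ne 2$: for $p=2$ it is elementary from the chaos expansion via $\sup_{n\ge 1}ne^{-2nt}=(2et)^{-1}$, but for general $p$ it is exactly Meyer's inequalities together with the multiplier theorem, whose proofs require genuine harmonic analysis (Littlewood--Paley--Stein theory for the Ornstein--Uhlenbeck semigroup, or Pisier's argument via Mehler's formula and $L^p$-boundedness of the Hilbert transform); everything else above is soft.
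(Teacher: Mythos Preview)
The paper does not supply a proof of this proposition; it is quoted from \cite{Nua1}. Your argument follows the standard route and is essentially correct. One terminological quibble: the $L^p$-boundedness of the operator with symbol $\phi_t(n)=(1+n)^{1/2}e^{-tn}$ is not covered by the classical ``Meyer multiplier theorem'' (which, as usually stated, treats multipliers of the form $h(n^{-1})$ with $h$ analytic at the origin). What you actually need is either the bounded $H^\infty$-calculus for the sectorial operator $-L$ on $L^p$, or, more directly, the analyticity of $(T_t)$ on $L^p$, which gives $\|(-L)^{1/2}T_tF\|_p\le C\,t^{-1/2}\|F\|_p$, combined with Meyer's equivalence $\|DG\|_{L^p(\Omega;\mathcal H)}\asymp\|(-L)^{1/2}G\|_p$. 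Either route closes the argument.

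You are also right to flag that the limit assertion, read literally, fails for $F\in L^p(\Omega)\setminus\mathbb D^{1,p}$: in that case $T_tF-F\notin\mathbb D^{1,p}$ for $t>0$ and the $\|\cdot\|_{1,p}$-norm is not even finite. In the only place the paper invokes this convergence (the nonnegativity lemma for the solution), what is actually used is merely $T_s\mathbf 1_{\{u(t,x)<0\}}\to\mathbf 1_{\{u(t,x)<0\}}$ in $L^2$, since one is pairing against $u(t,x)\in L^2$; for that, strong continuity of $T_t$ on $L^p$ suffices. Your proof of convergence in $\mathbb D^{1,p}$ under the additional hypothesis $F\in\mathbb D^{1,p}$ is correct and is the natural repair of the stated proposition.
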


%We refer to Nualart \cite{Nua} for a detailed account of the Malliavin calculus with respect to a Gaussian process.

%If $DF$ and $u$ are almost surely measurable functions on $\RR_+ \times \RR^d$  verifying condition \eref{abs1}, then the duality formula~\eref{dual} can be written  using the expression of the inner product in $\mathcal{H}$  given in
%\eref{innprod}
%\begin{equation} \label{eq1}
%\EE \left[ \delta(u)F \right] =
%\EE \left[ \int_{\RR^2_+\times\RR^{2d}}D_{s,x}F \, u_{t,y} \, \gamma(s-t) \, \Lambda(x-y) \, dsdtdxdy \right].
%\end{equation}

We are ready to give the definition of  mild  solution to equation \eref{SHE}. We denote by $p_{t}(x)$ the $d$-dimensional heat kernel $p_{t}(x)=(2\pi t)^{-d/2}e^{-|x|^2/2t} $, for any $t > 0$, $x \in \R^d$. For each $t\ge 0$, let $\mathcal{F}_t$ be the $\sigma$-field generated by the random variables $W(\varphi)$, where $\varphi$ has support in $[0,t ]\times \mathbb{R}^d$. We say that a random field $u(t,x)$ is adapted if for each $(t,x)$ the random variable $u(t,x)$ is $\mathcal{F}_t$-measurable. Then we have the following definition.

\begin{definition}\label{def}
An adapted   random field $u=\{u(t,x), t \geq 0, x \in
\mathbb{R}^d\}$ such that $\EE [ u^2(t,x)] < \infty$ for all $(t,x)$ is
a mild solution to equation \eref{SHE} with bounded initial condition $u_0$, if for any $(t,x) \in [0,
\infty)\times \mathbb{R}^d$, the process $\{p_{t-s}(x-y)u(s,y){\bf
1}_{[0,t]}(s), s \geq 0, y \in \mathbb{R}^d\}$ is Skorohod
integrable, and the following equation holds
\begin{equation}\label{eq:sko-mild}
u({t,x})=p_t u_0(x)+\int_0^t\int_{\mathbb{R}^d}p_{t-s}(x-y)u(s,y) \, \delta
W_{s,y} \quad a.s.
\end{equation}
\end{definition}

The following theorem about the existence and uniqueness of the solution to equation \eref{SHE} is taken from \cite{HHNT}.

 \begin{theorem}\label{thmSk1}
Suppose that   $\mu$ satisfies Hypothesis  \ref{hyp:mu}
and $\gamma$ is locally integrable. Then equation \eref{SHE} admits a
unique mild solution in the sense of Definition \ref{def}.
\end{theorem}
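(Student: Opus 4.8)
\noindent\emph{Proof strategy.}\quad I would construct the solution through its Wiener chaos expansion, which simultaneously gives uniqueness. Any adapted field $u$ with $\EE[u(t,x)^2]<\infty$ for every $(t,x)$ has an expansion $u(t,x)=\sum_{n\ge0}I_n\big(f_n(\cdot\,;t,x)\big)$, where $I_n$ denotes the $n$th multiple Skorohod integral and $f_n(\cdot\,;t,x)\in\mathcal{H}^{\otimes n}$ is symmetric. Inserting this into the mild formulation \eref{eq:sko-mild}, and using the rule that the Skorohod integral of an adapted field $v(s,y)=\sum_n I_n(g_n(\cdot\,;s,y))$ against $p_{t-s}(x-y)\,\delta W_{s,y}$ raises each chaos order by one, with new $(n+1)$st kernel equal to the symmetrization of $p_{t-s_{n+1}}(x-y_{n+1})\,g_n(s_1,y_1,\dots,s_n,y_n;s_{n+1},y_{n+1})$ in its $n+1$ pairs of variables, one obtains $f_0(t,x)=p_tu_0(x)$ together with a recursion linking $f_n$ to $f_{n-1}$. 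Its unique solution is $f_n(\cdot\,;t,x)=\frac{\lambda^n}{n!}\,\widetilde h_n(\cdot\,;t,x)$ (the tilde denoting symmetrization), where
\begin{equation*}
h_n(s_1,y_1,\dots,s_n,y_n;t,x):=\big(p_{s_1}u_0\big)(y_1)\,p_{s_2-s_1}(y_2-y_1)\cdots p_{s_n-s_{n-1}}(y_n-y_{n-1})\,p_{t-s_n}(x-y_n)\,\mathbf 1_{\{0<s_1<\cdots<s_n<t\}}.
\end{equation*}
Since the equation forces these kernels, two $L^2$ solutions have the same chaos expansion and hence coincide; thus it only remains to check that the random series built from the kernels above converges in $L^2(\Omega)$.

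\medskip For existence I would prove that $\sum_{n\ge0}n!\,\|f_n(\cdot\,;t,x)\|_{\mathcal{H}^{\otimes n}}^2<\infty$ for all $(t,x)$, which is exactly the condition for $\sum_n I_n(f_n(\cdot\,;t,x))$ to define an element of $L^2(\Omega)$. Once this is established, the field $u(t,x):=\sum_n I_n(f_n(\cdot\,;t,x))$ is adapted (each $f_n(\cdot\,;t,x)$ is supported in times $\le t$) and solves \eref{eq:sko-mild} by construction, so the whole matter reduces to the above summability. Because symmetrization does not increase the $\mathcal{H}^{\otimes n}$-norm, $n!\,\|f_n(\cdot\,;t,x)\|_{\mathcal{H}^{\otimes n}}^2\le\frac{\lambda^{2n}}{n!}\,\|h_n(\cdot\,;t,x)\|_{\mathcal{H}^{\otimes n}}^2$, and by the spectral representation in \eref{innprod} the norm $\|h_n(\cdot\,;t,x)\|_{\mathcal{H}^{\otimes n}}^2$ is an explicit integral over the two time simplices $\{0<s_1<\cdots<s_n<t\}$, $\{0<r_1<\cdots<r_n<t\}$ and over $\R^{nd}$, with integrand $\prod_j\gamma(s_j-r_j)$ times $\prod_j\mu(d\xi_j)$ times the spatial Fourier transforms of the two heat-kernel chains. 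I would bound it by: (i) using $|\mathcal{F}h_n(\mathbf s,\cdot)|\,|\mathcal{F}h_n(\mathbf r,\cdot)|\le\frac12\big(|\mathcal{F}h_n(\mathbf s,\cdot)|^2+|\mathcal{F}h_n(\mathbf r,\cdot)|^2\big)$ and the symmetry $\mathbf s\leftrightarrow\mathbf r$ (recall $\gamma$ is even) to keep a single copy of $|\mathcal{F}h_n|^2$; (ii) integrating out the $\mathbf r$-variables through $\int_{\{0<r_1<\cdots<r_n<t\}}\prod_j\gamma(s_j-r_j)\,d\mathbf r\le\big(\int_{-t}^{t}\gamma(u)\,du\big)^n$, which is finite because $\gamma$ is locally integrable; (iii) bounding the remaining integral over the $\mathbf s$-simplex and over $\R^{nd}$ using Hypothesis \ref{hyp:mu} to control the spectral integrals; and (iv) extracting the decay in $n$ produced by the time-simplex integration, which, together with the $1/n!$ from the kernel normalization, makes the series converge for every $\lambda$.

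\medskip The step I expect to be the main obstacle is (iii)--(iv). The Fourier transforms of the heat-kernel chains entangle the frequency variables $\xi_1,\dots,\xi_n$, so the spectral integrals against $\mu^{\otimes n}$ do not factor, and one has to control them jointly with the time-simplex integration while still extracting enough decay in $n$ to sum the series for every $\lambda>0$. This is exactly where the finiteness of $\int_{\R^d}\mu(d\xi)/(1+|\xi|^2)$ (Hypothesis \ref{hyp:mu}) and the local integrability of $\gamma$ are used, and the resulting iterated combinatorial estimate — which in fact yields an explicit exponential-type bound on $\EE[u(t,x)^2]$ — is the technical heart of the proof. It is carried out in \cite{HHNT}, to which the statement of the theorem defers.
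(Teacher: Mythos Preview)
Your proposal is correct and aligns with the paper's treatment: the paper does not prove Theorem~\ref{thmSk1} but simply records it as a result taken from \cite{HHNT}, and the Wiener chaos construction you outline---deriving the kernels $f_n$ from the mild formulation, reducing existence to the summability of $\sum_n n!\|f_n\|_{\mathcal{H}^{\otimes n}}^2$, and controlling that series via the spectral estimate under Hypothesis~\ref{hyp:mu} and the local integrability of $\gamma$---is precisely the argument carried out there. You correctly identify both the structure of the proof and the location of its technical core, and your closing sentence deferring to \cite{HHNT} matches what the paper itself does.
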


The next lemma states the non-negativity of the solution and will be used in the next section.

 \begin{lemma}\label{lemma:sol nonnegative}
Assume that  $\mu$ satisfies Hypothesis  \ref{hyp:mu}
and $\gamma$ is locally integrable. If the initial condition $u_0$ is nonnegative, then for each $(t,x)\in [0,\infty)\times \RR^d$, $u(t,x)\geq 0 $ a.s.
 \end{lemma}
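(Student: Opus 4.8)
The plan is to obtain non-negativity by a regularization and limiting argument, exploiting the Feynman–Kac-type representation available for smooth noise together with the chaos expansion / moment formula from \cite{HHNT}. The solution of \eref{SHE} is given by the Wiener chaos series $u(t,x) = \sum_{n=0}^\infty I_n(f_n(\cdot,t,x))$, where $I_n$ is the multiple Skorohod integral and the kernels $f_n$ are symmetrizations of products of heat kernels; this series converges in $L^2(\Omega)$ under Hypothesis \ref{hyp:mu} and $\gamma$ locally integrable (Theorem \ref{thmSk1}). The first idea is that when the noise is sufficiently regular — say $\gamma$ and $\Lambda$ replaced by smooth bounded approximations $\gamma_\e, \Lambda_\e$ obtained by mollification — the corresponding equation has a solution $u_\e(t,x)$ admitting a genuine Feynman–Kac representation of the form $u_\e(t,x) = \EE_B\!\left[ u_0(B_t^x) \exp\!\left( \lambda \int_0^t \dot W_\e(t-s,B_s^x)\,ds - \tfrac{\lambda^2}{2}\,\mathrm{Var}(\cdots) \right)\right]$, which is manifestly non-negative whenever $u_0 \ge 0$, since it is the expectation (over an independent Brownian motion $B$) of a non-negative random variable. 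The Wick structure is precisely what produces the variance-subtraction term and keeps the exponential well defined; the key point is only the sign, which is immediate.

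Next I would show that $u_\e(t,x) \to u(t,x)$ in $L^2(\Omega)$ (or at least in probability, along a subsequence) as $\e \to 0$, so that non-negativity passes to the limit. The cleanest route is through the chaos expansion: $u_\e(t,x) = \sum_n I_n^\e(f_n(\cdot,t,x))$ with the same kernels $f_n$ but the inner product in $\mathcal H$ now built from $\gamma_\e,\Lambda_\e$. Using the explicit formula for $\EE[|I_n(f_n)|^2]$ in terms of $\gamma$, $\mu$ and the heat kernels (from \cite{HHNT}), together with dominated convergence applied termwise and a uniform summable bound on the $n$th chaos norms (which is exactly what makes the series converge in Theorem \ref{thmSk1}), one gets $\|u_\e(t,x) - u(t,x)\|_{L^2(\Omega)} \to 0$. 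Then $u(t,x) \ge 0$ a.s.\ for each fixed $(t,x)$, since an $L^2$-limit of non-negative random variables is non-negative.

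An alternative, and perhaps technically lighter, route avoids mollifying the noise and instead uses the Ornstein–Uhlenbeck semigroup and a comparison/Feynman–Kac argument at the level of the approximating heat kernel $p^{(\e)}$, or approximates the solution by Picard iteration $u^{(0)}=p_tu_0$, $u^{(n+1)}(t,x)=p_tu_0(x)+\int_0^t\int_{\R^d}p_{t-s}(x-y)u^{(n)}(s,y)\,\delta W_{s,y}$; however this iteration does not obviously preserve the sign because Skorohod integrals are not non-negative operators, so I would favor the Feynman–Kac-with-smooth-noise approach above. The main obstacle is making the Feynman–Kac representation for $u_\e$ rigorous — verifying that the exponential functional is in $L^2$, that it solves the mild Skorohod equation with mollified noise, and that it is genuinely the unique solution provided by Theorem \ref{thmSk1} for the mollified coefficients — and then controlling the convergence $u_\e \to u$ uniformly enough in the chaos expansion; once those two analytic points are in hand, the non-negativity conclusion is automatic. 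This is presumably why the authors isolate it as a lemma rather than a remark.
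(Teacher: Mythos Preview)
Your overall strategy --- build a Feynman--Kac-type approximation that is manifestly non-negative, then pass to the limit --- is exactly what the paper does. The paper's approximant is the quantity
\[
u^{\varepsilon,\delta}(t,x)=\EE_B\!\left[\exp\!\left(W(A^{\varepsilon,\delta}_{t,x})-\tfrac12\|A^{\varepsilon,\delta}_{t,x}\|_{\mathcal H}^2\right)\right]
\]
from \cite{HHNT}, obtained by smearing the Dirac mass along the Brownian path (rather than by mollifying the covariances $\gamma,\Lambda$ as you propose); its positivity is equally immediate.

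The substantive difference is in the mode of convergence. You aim for $u_\varepsilon\to u$ in $L^2(\Omega)$, which would make the conclusion automatic. The paper instead invokes only the \emph{weak} convergence furnished by Theorem~3.6 of \cite{HHNT}: $\EE[F\,u^{\varepsilon,\delta}(t,x)]\to\EE[F\,u(t,x)]$ for every $F\in\mathbb{D}^{1,2}$. To close the gap it uses precisely the Ornstein--Uhlenbeck device you mention and then set aside: since $T_s$ maps $L^2$ into $\mathbb{D}^{1,2}$ and preserves non-negativity, one tests with $F=T_s\mathbf 1_{\{u(t,x)<0\}}$ to get $\EE[(T_s\mathbf 1_{\{u(t,x)<0\}})\,u(t,x)]\ge 0$, and then lets $s\to 0$ via Proposition~\ref{prop: OU semi} to conclude $\EE[\mathbf 1_{\{u(t,x)<0\}}u(t,x)]\ge 0$.

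Your $L^2$ route is plausible but, as written, slightly muddled: if you change $\gamma,\Lambda$ to $\gamma_\varepsilon,\Lambda_\varepsilon$ you change the Hilbert space $\mathcal H$ and hence the noise, so it is not clear in what sense $u_\varepsilon$ and $u$ live on a common probability space for $\|u_\varepsilon-u\|_{L^2(\Omega)}$ to be meaningful. The clean fix is to keep $\mathcal H$ fixed and mollify the \emph{kernels} instead --- which is exactly the paper's $A^{\varepsilon,\delta}_{t,x}$ --- but then the convergence available from \cite{HHNT} is only weak, and one is back to needing the OU trick. So the paper's combination of the \cite{HHNT} approximation with the OU semigroup is the technically lighter path; your variant could be made to work with additional care but buys nothing extra here.
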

 \begin{proof}
 We will follow the procedure in Section 3.2, \cite{HHNT}.  For any $\delta > 0$, we define the function $\varphi_{\delta}(t)=\frac{1}{\delta}{\bf 1}_{[0,\delta]}(t)$ for $t \in \R$.  Then, $\varphi_{\delta}(t)p_{\varepsilon}(x)$  provides an approximation of the Dirac delta function $\delta_0(t,x)$ as $\varepsilon$ and $\delta$ tend to zero. Define
 \begin{equation}\label{eq9}
u^{\varepsilon,\delta}(t,x)=\EE_B \left[ \exp \left(  W (
A_{t,x}^{\varepsilon,\delta})-\frac{1}{2}\alpha^{\varepsilon,\delta}_{t,x}\right)
\right]\,,
\end{equation}
where
\begin{equation}  \label{m3}
A_{t,x}^{\varepsilon,\delta}(r,y)=\frac 1\delta
\left(\int_0^{\delta \wedge (t-r)}
p_{\varepsilon}(B_{t-r-s}^x-y)ds\right) \mathbf{1}_{[0,t]} (r),
\quad\text{and}\quad
\alpha^{\varepsilon,\delta}_{t,x}=\|A^{\varepsilon,\delta}_{t,x}\|^2_{\mathcal{H}},
\end{equation}
for  a standard $d$-dimensional Brownian motion $B$ independent of
$W$.  Then it is obvious from the definition of $u^{\varepsilon,\delta}(t,x)$ that $u^{\varepsilon,\delta}(t,x) > 0$ a.s. for each $(t,x)$, and from Theorem 3.6 in \cite{HHNT} and its proof, we see that for each $F \in \mathbb{D}^{1,2}$ and $(t,x)$, $\EE (F u^{\varepsilon,\delta}(t,x))$ converges to $\EE (F u(t,x))$. Now for each fixed $(t,x)$, we take $F=T_s {\bf 1}_{\{u(t,x)<0\}}$, from Proposition \ref{prop: OU semi} we know that such $F$ is in $\mathbb{D}^{1,2}$, for each $s > 0$. So we have
\begin{equation*}
\EE \left( \left(T_s {\bf 1}_{\{u(t,x)<0\}}\right) u(t,x) \right)=\lim_{\delta,\varepsilon \to 0} \EE \left( \left(T_s {\bf 1}_{\{u(t,x)<0\}}\right) u^{\varepsilon,\delta}(t,x) \right) \geq 0\,,
\end{equation*}
then letting $s$ go to $0$ we obtain  by Proposition \ref{prop: OU semi}
\begin{equation*}
\EE \left( {\bf 1}_{\{u(t,x)<0\}} u(t,x) \right) \geq 0\,,
\end{equation*}
which shows that $u(t,x)\geq 0$ a.s.
 \end{proof}

The next result concerning the moment formula for the solution is taken from \cite{HHNT}, see also \cite{Con}, where $\gamma$ is the Dirac delta function $\delta$.

 \begin{theorem}\label{thm:mom}
Suppose $\gamma$ is locally integrable and $\mu$ satisfies
Hypothesis  \ref{hyp:mu}. Let $u(t,x)$ be the solution to equation \eref{SHE}. Then for any  integer $p\geq 2$
\begin{equation}\label{eq:mom}
\EE   u^p(t,x)  =\EE_B \left[ \prod_{i=1}^p u_0(B_t^i+x) \exp\left( \lambda^2\sum_{1 \leq i < j
\leq p}\int_0^t \int_0^t \gamma (s-r)\Lambda(B_s^i-B_r^j)ds
dr\right)\right]\,,
\end{equation}
where $\{B^j, \, j=1,\dots, p\}$  is a family of $d$-dimensional  independent standard Brownian motions independent of $W$.
\end{theorem}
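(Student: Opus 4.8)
The plan is to deduce the identity \eref{eq:mom} from the smooth approximations $u^{\varepsilon,\delta}$ already introduced in the proof of Lemma \ref{lemma:sol nonnegative}, for which a genuine Feynman--Kac representation is available, and then to let $\varepsilon,\delta\to 0$. Recall that (with the obvious extra factor $u_0(B^x_t)$ when $u_0\not\equiv 1$) the approximate solution is the Wick exponential
\begin{equation*}
u^{\varepsilon,\delta}(t,x)=\EE_B\left[u_0(B^x_t)\exp\left(\lambda W(A^{\varepsilon,\delta}_{t,x})-\frac{\lambda^2}{2}\|A^{\varepsilon,\delta}_{t,x}\|^2_{\mathcal H}\right)\right],
\end{equation*}
where $A^{\varepsilon,\delta}_{t,x}$ is the kernel in \eref{m3}, and $u^{\varepsilon,\delta}$ solves the heat equation driven by the noise whose covariance has been mollified by $\varphi_\delta\otimes p_\varepsilon$.

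First I would compute $\EE[(u^{\varepsilon,\delta})^p(t,x)]$ exactly. Introducing $p$ independent copies $B^1,\dots,B^p$ of the Brownian motion, all independent of $W$, and using Fubini,
\begin{equation*}
\EE\bigl[(u^{\varepsilon,\delta})^p(t,x)\bigr]=\EE_{B^1,\dots,B^p}\left[\prod_{i=1}^p u_0(B^i_t+x)\,\EE_W\!\left[\prod_{i=1}^p\exp\!\left(\lambda W(A^{\varepsilon,\delta,i}_{t,x})-\tfrac{\lambda^2}{2}\|A^{\varepsilon,\delta,i}_{t,x}\|^2_{\mathcal H}\right)\right]\right],
\end{equation*}
where $A^{\varepsilon,\delta,i}_{t,x}$ is built from $B^i$. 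For fixed Brownian paths $\{W(A^{\varepsilon,\delta,i}_{t,x})\}_{i=1}^p$ is a centered Gaussian vector, and since $\EE\bigl[\prod_i\exp(X_i-\tfrac12\mathrm{Var}\,X_i)\bigr]=\exp\bigl(\sum_{i<j}\mathrm{Cov}(X_i,X_j)\bigr)$ for jointly Gaussian $X_i$ (the diagonal terms cancelling against the Wick renormalization), the inner expectation equals $\exp\bigl(\lambda^2\sum_{i<j}\langle A^{\varepsilon,\delta,i}_{t,x},A^{\varepsilon,\delta,j}_{t,x}\rangle_{\mathcal H}\bigr)$. By the definition \eref{innprod} of the inner product and the semigroup property of the heat kernel, each cross term equals $\int_{[0,t]^2}\gamma_\delta(s-r)\Lambda_\varepsilon(B^i_s-B^j_r)\,ds\,dr$ for suitable mollifications $\gamma_\delta,\Lambda_\varepsilon$ of $\gamma,\Lambda$ (the base point $x$ cancels in the differences), and this converges to $\int_0^t\int_0^t\gamma(s-r)\Lambda(B^i_s-B^j_r)\,ds\,dr$ as $\varepsilon,\delta\to 0$, reproducing the integrand on the right-hand side of \eref{eq:mom}.

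It remains to justify (i) $\EE[(u^{\varepsilon,\delta})^p(t,x)]\to\EE[u^p(t,x)]$ and (ii) passage to the limit inside $\EE_B$ in the formula just obtained. Both follow once one has, uniformly in $\varepsilon,\delta$, the exponential moment bound
\begin{equation*}
\EE_B\left[\exp\left(c\int_0^t\int_0^t\gamma_\delta(s-r)\Lambda_\varepsilon(B_s-B_r)\,ds\,dr\right)\right]\le C(c,t)<\infty\qquad\text{for every }c>0,
\end{equation*}
which is proved from the local integrability of $\gamma$ and Hypothesis \ref{hyp:mu} by writing $\Lambda(z)=(2\pi)^{-d}\int_{\RR^d}e^{\imath\langle\xi,z\rangle}\mu(d\xi)$ and using scaling together with standard small-ball estimates for quadratic functionals of Brownian motion. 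Granting this bound, the explicit moment formula shows that $\{u^{\varepsilon,\delta}(t,x)\}_{\varepsilon,\delta}$ is bounded in every $L^q(\Omega)$ (using that $u_0$ is bounded), hence $\{(u^{\varepsilon,\delta})^p(t,x)\}$ is uniformly integrable; comparing the Wiener chaos expansions of $u^{\varepsilon,\delta}$ and $u$ term by term gives $u^{\varepsilon,\delta}(t,x)\to u(t,x)$ in $L^2(\Omega)$, hence in probability, so (i) follows, while the same bound yields dominated convergence inside $\EE_B$ for (ii).

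I expect the exponential-moment / uniform-integrability step to be the only genuine obstacle; the algebraic part is just the Gaussian identity above. An alternative, more combinatorial route avoids the approximation entirely: starting from the chaos expansion $u(t,x)=\sum_{n\ge 0}I_n(f_n(\cdot;t,x))$, with $f_n$ the symmetrized product of heat kernels, one applies the diagram (Wick) formula for moments of sums of multiple Wiener integrals and resums the series over pairings into the Brownian expectation in \eref{eq:mom}; the analytic input — exponential integrability, controlled by Hypothesis \ref{hyp:mu} — is exactly the same.
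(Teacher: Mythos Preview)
The paper does not prove this theorem; it is quoted from \cite{HHNT} (and \cite{Con} for the time-white case), so there is no in-paper argument to compare against. Your outline is in fact the approach used in \cite{HHNT}: compute $\EE[(u^{\varepsilon,\delta})^p]$ via the Gaussian identity for Wick exponentials, identify the cross terms $\langle A^{\varepsilon,\delta,i}_{t,x},A^{\varepsilon,\delta,j}_{t,x}\rangle_{\mathcal H}$ with mollified versions of $\int_0^t\int_0^t\gamma(s-r)\Lambda(B^i_s-B^j_r)\,ds\,dr$, and then pass to the limit using the exponential integrability furnished by Hypothesis~\ref{hyp:mu} together with the local integrability of $\gamma$; your identification of that exponential moment bound as the only nontrivial step is accurate.
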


\section{Main results}

We need first to introduce some notation. If $\mu$ is a measure satisfying Hypothesis \ref{hyp:mu},  for any  real number $N>0$, we define
\begin{equation}\label{eq:C_N D_N}
C_N = \int_{|\xi| > N}\frac{\mu(d\xi)}{|\xi|^2}, \quad \text{and} \quad D_N= \mu\left\{ \xi: |\xi| \leq N \right\}\,.
\end{equation}
On the other hand, if $\gamma$ is locally integrable, we set
\begin{equation}\label{Gamma_t}
\int_0^t \gamma(s)ds = \Gamma_t\,.
\end{equation}

\begin{theorem}\label{thm:cor upper}
Let $u(t,x)$ be the solution to equation \eref{SHE}  driven by a noise $W$ with covariance  structure  \eref{cov}. Assume that  $u_0$ is non-negative and supported in the ball  $B_M=\{ x\in \RR^d: |x| \le M \}$.  Assume that $\mu$  satisfies Hypothesis \ref{hyp:mu} and $\gamma $ is locally integrable.
Set $\theta_t = \sqrt{D_{N_t} C_{N_t}^{-1}}$, where
\begin{equation}\label{N_t}
N_t = \inf \left\{N \geq 0: C_N \leq \frac{(2\pi)^d}{32 (p-1) \lambda^2 \Gamma_t} \right\}\,.
\end{equation}
Then, for any integer $p\ge 2$, we have
\begin{equation}
\bar{\nu}(p):= \inf \left \{ \varrho>0:  \limsup _{t \to \infty} \frac{1}{t\theta^2_t} \sup_{|x|\geq \varrho t\theta_t} \log \EE u^p(t,x) < 0 \right\} \le 1\,.
\end{equation}

\end{theorem}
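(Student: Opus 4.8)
The plan is to bound the $p$-th moment $\EE u^p(t,x)$ for $|x| \ge \varrho t\theta_t$ with $\varrho > 1$, using the moment formula \eref{eq:mom}, and show that after dividing by $t\theta_t^2$ the logarithm stays bounded away from $0$ (in fact tends to $-\infty$). First I would start from Theorem \ref{thm:mom}: since $u_0$ is nonnegative and supported in $B_M$, the product $\prod_{i=1}^p u_0(B_t^i+x)$ is nonzero only on the event that every $B_t^i + x \in B_M$, i.e. $|B_t^i| \ge |x| - M$ for each $i$. So
\begin{equation*}
\EE u^p(t,x) \le \|u_0\|_\infty^p \, \EE_B\left[ \1_{\{|B_t^i|\ge |x|-M \ \forall i\}} \exp\left(\lambda^2 \sum_{i<j}\iint \gamma(s-r)\Lambda(B_s^i - B_r^j)\,ds\,dr\right)\right].
\end{equation*}
The exponential term is then split via the spectral decomposition: writing $\Lambda(z) = (2\pi)^{-d}\int e^{\imath\langle\xi,z\rangle}\mu(d\xi)$ and splitting $\mu$ into the region $|\xi| \le N_t$ (contributing at most $D_{N_t}\Gamma_t$ per pair, a deterministic bound since $|e^{\imath\langle\xi,z\rangle}| \le 1$ and $\gamma \ge 0$) and the region $|\xi| > N_t$. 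On $\{|\xi|>N_t\}$ I would use the elementary bound $1 - \cos$ type estimates or, more directly, control $\iint\gamma(s-r)\Lambda_{>N_t}(B_s^i-B_r^j)\,ds\,dr$ in terms of $\Gamma_t$ and $C_{N_t}$; the key point is that by the definition \eref{N_t} of $N_t$, we have $(p-1)\lambda^2 \Gamma_t C_{N_t} \le (2\pi)^d/32$, which makes the high-frequency part of the exponential have an $L^1$ (or exponential-moment) norm bounded by an absolute constant — small enough to be absorbed. This is essentially the small-ball / exponential-moment estimate referenced in the introduction; I would import the relevant lemma from the earlier sections or from \cite{HHNT}.

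Next, having extracted a bounded multiplicative constant from the noise term, the problem reduces to estimating the Gaussian probability $P(|B_t^i|\ge |x|-M \ \forall i) = P(|B_t|\ge |x|-M)^p$ for a single $d$-dimensional Brownian motion, since the $B^i$ are independent. By the standard Gaussian tail bound, $P(|B_t|\ge R) \le C\exp(-R^2/(2t))$ for $R$ large relative to $\sqrt{t}$. With $R = |x| - M \ge \varrho t\theta_t - M$, this gives
\begin{equation*}
\log \EE u^p(t,x) \le \text{const} - \frac{p(\varrho t\theta_t - M)^2}{2t} = \text{const} - \frac{p\varrho^2 t\theta_t^2}{2} + o(t\theta_t^2),
\end{equation*}
provided $t\theta_t^2 \to \infty$ (which I would need to check follows from Hypothesis \ref{hyp:mu}: as $t\to\infty$, $\Gamma_t$ is nondecreasing, $N_t$ is nondecreasing, $C_{N_t} \to 0$, $D_{N_t} \to \mu(\RR^d)$ if finite or $\to\infty$, and in all cases $\theta_t^2 = D_{N_t}/C_{N_t} \to \infty$ fast enough; if $\Gamma_t$ stays bounded the statement is about the regime where it does grow, and one checks the product $t\theta_t^2$ still diverges). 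Then dividing by $t\theta_t^2$,
\begin{equation*}
\limsup_{t\to\infty}\frac{1}{t\theta_t^2}\sup_{|x|\ge\varrho t\theta_t}\log\EE u^p(t,x) \le -\frac{p\varrho^2}{2} < 0
\end{equation*}
for every $\varrho > 0$, and in particular for $\varrho > 1$, which gives $\bar\nu(p) \le 1$.

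The main obstacle, I expect, is the high-frequency noise estimate: showing rigorously that the expectation
$\EE_B[\exp(\lambda^2\sum_{i<j}\iint \gamma(s-r)\Lambda_{>N_t}(B_s^i-B_r^j)\,ds\,dr)]$
is bounded by an absolute constant uniformly in $t$, using only $(p-1)\lambda^2\Gamma_t C_{N_t}\le (2\pi)^d/32$. The natural route is to expand the exponential in a power series, bound the $n$-th term using the inequality $\iint\gamma(s-r)\Lambda_{>N}(B_s^i-B_r^j)\,ds\,dr \le \Gamma_t \cdot \frac{1}{(2\pi)^d}\int_{|\xi|>N}|\cdots|\mu(d\xi)$ together with the bound relating $\Lambda_{>N}$ to $C_N$ via $|1-e^{\imath\langle\xi,z\rangle}|^2/|\xi|^2$-type factors and the occupation-time/Brownian scaling, so that the series is dominated by $\sum_n (\text{const}\cdot(p-1)\lambda^2\Gamma_t C_{N_t})^n \le \sum_n 32^{-n}$. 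The constant $32$ in \eref{N_t} is presumably chosen precisely so this geometric series converges with room to spare; getting the combinatorics of the $\binom{p}{2}$ pairs and the chaos-type bookkeeping exactly right is the delicate part, and I would lean on the moment estimates already developed in \cite{HHNT} and the earlier sections of this paper rather than redo them from scratch.
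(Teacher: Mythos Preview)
Your proposal has a genuine gap that leads to an over-strong conclusion. You claim that after the spectral splitting the entire noise exponential can be ``extracted as a bounded multiplicative constant,'' and then conclude
\[
\limsup_{t\to\infty}\frac{1}{t\theta_t^2}\sup_{|x|\ge\varrho t\theta_t}\log\EE u^p(t,x)\ \le\ -\frac{p\varrho^2}{2}\ <\ 0\quad\text{for every }\varrho>0,
\]
which would yield $\bar\nu(p)=0$, strictly stronger than the stated bound $\bar\nu(p)\le 1$. The error is in the low-frequency part: you write that it contributes ``at most $D_{N_t}\Gamma_t$ per pair,'' but the double time integral is $\int_0^t\!\int_0^t\gamma(s-r)\,ds\,dr\le 4t\Gamma_t$, not $\Gamma_t$. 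Hence the deterministic bound on the low-frequency exponent is of order
\[
\lambda^2\binom{p}{2}(2\pi)^{-d}D_{N_t}\cdot 4t\,\Gamma_t\ \asymp\ p\,t\,D_{N_t}C_{N_t}^{-1}\ =\ p\,t\,\theta_t^2,
\]
using the defining relation $(p-1)\lambda^2\Gamma_t C_{N_t}\asymp (2\pi)^d/32$. This growing term is exactly what competes with the Gaussian decay $-p\varrho^2 t\theta_t^2/\text{(const)}$ and produces a nontrivial threshold; dropping it is what makes your final inequality too good. A secondary issue is that after pulling out the (deterministic) low-frequency factor you still need to decouple the high-frequency exponential from the indicator $\mathbf{1}_{\{|B_t^i|\ge |x|-M\}}$; this requires Cauchy--Schwarz or H\"older, which you do not invoke.

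For comparison, the paper's argument is organized differently: it applies Cauchy--Schwarz at the outset to separate the indicator $\prod_i\mathbf{1}_{B_M}(x+B_t^i)$ from the exponential, recognizes the remaining expectation as the $p$th moment of an auxiliary solution $v(t,x)$ (equation \eref{SHE} with $2\Lambda$ and initial datum $1$), and then bounds $\|v(t,x)\|_{L^p(\Omega)}$ via its Wiener chaos expansion, hypercontractivity, and Lemma~\ref{lemmma1}. That lemma is where the $C_N/D_N$ split enters, yielding $\|v(t,x)\|_{L^p}\le 2\exp\bigl(\tfrac12 tD_{N_t}C_{N_t}^{-1}\bigr)$; combined with the Gaussian estimate of Lemma~\ref{lem:heat kernel int} this gives
\[
\frac{1}{t\theta_t^2}\log\EE u^p(t,x)\ \le\ \frac{p}{4}-\frac{p\varrho^2}{4(\varkappa+1)}+o(1),
\]
which is negative precisely for $\varrho>1$ (after $\varkappa\downarrow 0$). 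Your spectral-splitting idea is not unreasonable and, if the missing $t\theta_t^2$ term is restored and the decoupling is done carefully, it would lead to a bound of the same shape; but as written it proves something false.
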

\begin{proof}
Using the moment formula (\ref{eq:mom}), together with Cauchy-Schwartz  inequality,  we can write for any integer  $p\ge 2$
\begin{eqnarray*}
\EE u^p(t,x)& =& \EE_B \left( \prod_{i=1}^p u_0(x+B_t^i) \exp \left (\lambda^2 \sum_{1 \leq i < j \leq p}   \int_0^t \int_0^t \gamma(s-r) \Lambda(B_s^i-B_r^j) ds dr \right) \right)\\
&\leq& \EE_B \left( \prod_{i=1}^p {\bf 1}_{B_M}(x+B_t^i) \exp \left (\lambda^2 \sum_{1 \leq i < j \leq p}   \int_0^t \int_0^t \gamma(s-r) \Lambda(B_s^i-B_r^j) ds dr \right) \right) \|u_0\|^p_{\infty}\\
&\leq&\left( \EE_B   \exp \left ( 2 \lambda^2\sum_{1 \leq i < j \leq p}   \int_0^t \int_0^t \gamma(s-r) \Lambda(B_s^i-B_r^j) ds dr \right) \right)^{\frac{1}{2}} \\
&& \times  \left( \EE {\bf 1}_{B_M}(x+B^1_t) \right)^{\frac{p}{2}}\|u_0\|^p_{\infty}\,.
\end{eqnarray*}
Note that in the above expression, the first expectation in the  last inequality is exactly the $p$th moment of the solution   to equation \eref{SHE} (denoted by $v(t,x)$) with noise $W$ having a covariance functional with parameters   $\gamma$ and { ${2}  \Lambda$} respectively, and with initial condition $1$. From \cite{HHNT} we see that $v(t,x)$ admits the chaos expansion 
\begin{equation*}
v(t,x)=\sum_{n=0}^{\infty} I_n(f_n(\cdot,t,x)) \,,
\end{equation*}
 where  for $n \geq 1$, the kernel $f_n$  is given by
 \begin{equation*}
 f_n(s_1,x_1, \dots, s_n, x_n, t,x) = \frac{1}{n!} p_{t-s_{\sigma(n)}}(x-x_{\sigma(n)}) \cdots p_{s_{\sigma(2)}-s_{\sigma(1)}}(x_{\sigma(2)}-x_{\sigma(1)})\,.
 \end{equation*}
 In the above expression, $\sigma$ is the permutation of $\{1,2,\dots,n\}$ such that $s_{\sigma(1)}< s_{\sigma(2)} < \cdots < s_{\sigma(n)} <  s_{\sigma(n+1)}=t$.  We have
\begin{equation*}
\EE [I_n(f_n(\cdot,t,x))^2]=n! \|f_n(\cdot, t,x)\|^2_{\mathcal{H}_1^{\otimes n}}\,,
\end{equation*}
where $\mathcal{H}_1$ denotes the norm introduced in \eref{innprod}, but with $\Lambda$ replaced by {${2}   \Lambda$}. By the hypercontractivity property, we have
\begin{eqnarray*}
\|I_n(f_n(\cdot,t,x))\|_{L^p(\Omega)}\leq (p-1)^{\frac{n}{2}}\|I_n(f_n(\cdot,t,x))\|_{L^2(\Omega)}\,.
\end{eqnarray*}
Therefore,  have the $L^p$ norm of the $v(t,x)$ is bounded as follows
\begin{eqnarray*}
\|v(t,x)\|_{L^p(\Omega)}&\leq& \sum_{n=0}^{\infty} \|I_n(f_n(\cdot,t,x))\|_{L^p(\Omega)} \leq \sum_{n=0}^{\infty}(p-1)^{\frac{n}{2}}\|I_n(f_n(\cdot,t,x))\|_{L^2(\Omega)}\\
&=&\sum_{n=0}^{\infty}(p-1)^{\frac{n}{2}} \sqrt{n!} \|f_n(\cdot,t,x)\|_{\mathcal{H}_1^{\otimes n}}\,.
\end{eqnarray*}
Then using Fourier transform as in \cite{HHNT} we have
\begin{eqnarray*}
n!\|f_n(\cdot,t,x)\|^2_{\mathcal{H}_1^{\otimes n}} &\leq& \frac{(2\lambda^2)^n n!}{(2\pi)^{nd}} 
\int_{[0,t]^{2n}}\int_{\RR^{nd}}  |\mathcal{F}f_n(s,\cdot,t,x)(\xi)|^2\mu(d\xi) \prod_{i=1}^n \gamma(s_i-r_i)ds dr \\
&\leq&  \frac{ (4 \lambda^2\Gamma_t)^n n!}{(2\pi)^{nd}} 
\int_{[0,t]^{n}}\int_{\RR^{nd}}  |\mathcal{F}f_n(s,\cdot,t,x)(\xi)|^2\mu(d\xi) ds\\
&\leq& \frac{ (4 \lambda^2 \Gamma_t)^n}{(2\pi)^{nd}} \int_{\RR^{nd}} \int_{T_n(t)} \prod_{i=1}^n e^{-(s_{i+1}-s_i)|\xi_i|^2} ds \mu(d\xi)\,,
\end{eqnarray*}
where $T_n(t)$ denotes the simplex $\{0\leq s_1 \leq s_2 \leq \dots \leq s_n\leq t\}$,  $\mu(d\xi) = \prod_{i=1}^n \mu(d\xi_i)$ and $ds$ is defined similarly.  Then with the change of variable $s_{i+1}-s_i = w_i$ and by Lemma \ref{lemmma1}  below applied to $N=N_t$, we obtain
\begin{eqnarray*}
n!\|f_n(\cdot,t,x)\|^2_{\mathcal{H}_1^{\otimes n}} &\leq&   \frac{ (4 \lambda^2 \Gamma_t)^n}{(2\pi)^{nd}} \int_{S_{t,n}} \int_{\RR^{nd}} \prod_{i=1}^n e^{-w_i|\xi_i|^2} \mu(d\xi) dw\\
&\leq&  \frac{ (4 \lambda^2 \Gamma_t)^n}{(2\pi)^{nd}} \sum_{k=0}^n {n \choose k} \frac{t^k}{k!} D_{N_t}^k C_{N_t}^{n-k}\\
&\leq& \left(\frac{8 \lambda^2  \Gamma_t C_{N_t}}{(2\pi)^d} \right)^n \sum_{k=0}^{\infty} \frac{t^k D_{N_t}^k C_{N_t}^{-k}}{k!}\\
&=& \left(\frac{8 \lambda^2 \Gamma_t C_{N_t}}{(2\pi)^d} \right)^n e^{t D_{N_t} C_{N_t}^{-1} }\,,
\end{eqnarray*}
where $S_{t,n}=\{(w_1, \dots, w_n)\in [0,  \infty)^n: w_1 +\cdots +w_n \le t\}$. 
Thus
\begin{eqnarray*}
\|v(t,x)\|_{L^p(\Omega)} &\leq& \sum_{n=0}^{\infty} \left(\frac{8 \lambda^2 \Gamma_t C_{N_t}}{(2\pi)^d} \right)^{\frac{n}{2}} (p-1)^{\frac{n}{2}}e^{\frac{1}{2} t D_{N_t} C_{N_t}^{-1} }
\leq 2 e^{\frac{1}{2}t D_{N_t} C_{N_t}^{-1}}\,,
\end{eqnarray*}
where the last inequality comes from the definition of ${N_t}$.  Thus we obtain from Lemma \ref{lem:heat kernel int} below
\begin{eqnarray*}
\EE u(t,x)^p \leq 2^{\frac p2}  e^{\frac{p}{4}t D_{N_t} C_{N_t}^{-1}} \frac{1}{(2\pi t)^{\frac{dp}{4}}} e^{-\frac{|x|^2p}{4 t (\varkappa +1)}} e^{\frac{M^2p}{4 t \varkappa}} \omega_d^{\frac{p}{2}} M^{\frac{dp}{2}} \| u_0\|^p_\infty
\end{eqnarray*}
for any $\varkappa > 0$. 
As a consequence,  if we want 
\begin{eqnarray*}
\limsup_{ t\to \infty} \frac{1}{t \theta_t^2} \log \sup_{|x| \geq \varrho t \theta_t} \EE u(t,x)^p\leq \frac{p}{4}- \frac{p \varrho^2}{4(\varkappa+1)}< 0\,,
\end{eqnarray*}
we need $\varrho > \sqrt{\varkappa +1}$. Letting $\varkappa \to 0$   we conclude that $\bar{\nu}(p)\leq 1$. 
\end{proof}

Section 6 in \cite{HHNT} gives the moment upper bounds for some specific choices of $\gamma$ and $\Lambda$, assuming the initial condition is a bounded function. Actually the proof of Theorem \ref{thm:cor upper} above also gives a general upper bound for the $p$th moment,  stated in the following corollary.
\begin{corollary}  
Let $u(t,x)$ be the solution to equation \eref{SHE} with a bounded nonnegative initial condition. Let $D_N, C_N$ be defined as in \eref{eq:C_N D_N} and $N_t$ be defined as in \eref{N_t}. Then we have the moment upper bound
\begin{equation}
\EE u^p(t,x) \leq C^p \exp \left(C p t D_{N_t} C^{-1}_{N_t} \right)\,,
\end{equation}
for some constant $C$ independent of $p$ and $t$. 
\end{corollary}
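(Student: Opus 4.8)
The plan is to rerun the argument of Theorem \ref{thm:cor upper}, dropping the spatial localization step, which is the only place where the compact support of $u_0$ and the spatial decay of the moments are used. Concretely, starting from the moment formula \eref{eq:mom}, instead of using the support of $u_0$ together with the Cauchy--Schwarz inequality, I would simply bound $\prod_{i=1}^p u_0(B_t^i+x)\le\|u_0\|_{\infty}^p$ pointwise inside $\EE_B[\,\cdot\,]$, which gives
\[
\EE u^p(t,x)\le\|u_0\|_{\infty}^p\,\EE_B\Big[\exp\Big(\lambda^2\sum_{1\le i<j\le p}\int_0^t\!\!\int_0^t\gamma(s-r)\Lambda(B_s^i-B_r^j)\,ds\,dr\Big)\Big].
\]
By Theorem \ref{thm:mom} applied with constant initial condition $1$, the expectation on the right is exactly $\EE\bar u^p(t,x)=\|\bar u(t,x)\|_{L^p(\Omega)}^p$, where $\bar u$ denotes the solution of \eref{SHE} with initial condition identically $1$.

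Next I would estimate $\|\bar u(t,x)\|_{L^p(\Omega)}$ word for word as in the proof of Theorem \ref{thm:cor upper}: expand $\bar u=\sum_n I_n(f_n)$ with $f_n$ the symmetrized products of heat kernels, apply hypercontractivity to get $\|I_n(f_n)\|_{L^p(\Omega)}\le(p-1)^{n/2}\sqrt{n!}\,\|f_n\|_{\mathcal{H}^{\otimes n}}$, bound $n!\|f_n\|^2_{\mathcal{H}^{\otimes n}}$ by passing to the spatial Fourier transform, and apply Lemma \ref{lemmma1} with $N=N_t$ after the substitution $s_{i+1}-s_i=w_i$. The only difference is that the covariance $2\Lambda$ occurring in that proof is now $\Lambda$, which only decreases the constants; the definition \eref{N_t} of $N_t$ still forces the resulting geometric series to converge and yields $\|\bar u(t,x)\|_{L^p(\Omega)}\le 2\,e^{\frac12 tD_{N_t}C_{N_t}^{-1}}$. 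Lemma \ref{lem:heat kernel int} is not needed here, since no spatial decay is asserted. Combining this with the previous display gives
\[
\EE u^p(t,x)\le(2\|u_0\|_{\infty})^p\,e^{\frac p2 tD_{N_t}C_{N_t}^{-1}},
\]
which is of the announced form with $C=\max\{2\|u_0\|_{\infty},\tfrac12\}$, a constant depending only on $\|u_0\|_{\infty}$ and in particular independent of $p$ and $t$.

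I do not expect a genuine obstacle: the statement is essentially a by-product of the proof of Theorem \ref{thm:cor upper}. The two points that deserve a word of care are (i) invoking the moment formula \eref{eq:mom} twice---first for the given bounded datum, then, after the pointwise bound, to recognize the remaining quantity as the $p$th moment of the solution with constant initial condition---and (ii) checking that the chaos estimate still closes once $2\Lambda$ is replaced by $\Lambda$, i.e.\ that the numerical constant $32$ in \eref{N_t} leaves enough room, which it does a fortiori. Making $C$ manifestly free of $p$ and $t$ is routine bookkeeping: the $p$-dependence is carried by the base $C^p$ and by the hypercontractivity factor $(p-1)^{n/2}$, which is absorbed into the ($p$- and $t$-dependent) choice of $N_t$, while all of the $t$-dependence sits in the exponent through $D_{N_t}C_{N_t}^{-1}$.
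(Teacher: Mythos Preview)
Your proposal is correct and follows the paper's intended approach: the paper gives no separate proof of the corollary, merely observing that ``the proof of Theorem \ref{thm:cor upper} above also gives'' the stated bound, and your extraction is exactly the natural one. Your version is in fact slightly cleaner than a literal reading of that proof, since by bounding $\prod_i u_0(B^i_t+x)\le\|u_0\|_\infty^p$ directly (rather than via Cauchy--Schwarz) you avoid the auxiliary solution $v$ with doubled spatial covariance $2\Lambda$ and work with $\bar u$ and the original $\Lambda$; as you note, this only improves the geometric ratio entering the chaos estimate, so the choice of $N_t$ in \eref{N_t} still closes the series with room to spare.
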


The next lemma is used in the proof of Theorem \ref{thm:cor upper}. For a proof, see Lemma 3.3 in \cite{HHNT}.
\begin{lemma}\label{lemmma1}
Let $\mu$ satisfy Hypothesis \ref{hyp:mu}. For any $N
> 0$ let $D_N$ and $C_N$ be given by (\ref{eq:C_N D_N}). Then
we have
\[
 \int_{\RR^{nd}}\int_{S_{t,n}}e^{- \sum_{i=1}^n w_i |\xi_i|^2}
 dw  \mu(d\xi)
\leq\sum_{k=0}^n {n \choose k}
\frac{t^k}{k!}D_N^k C_N^{n-k}\,.
\]
\end{lemma}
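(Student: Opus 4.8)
The plan is to reduce the coupled integral to a product structure by partitioning the frequency variables according to whether each $|\xi_i|$ exceeds the threshold $N$, and to handle the simplex constraint by integrating out the low-frequency time variables first. I would write $1 = \mathbf{1}_{\{|\xi_i| \le N\}} + \mathbf{1}_{\{|\xi_i| > N\}}$ for each $i$ and expand the product measure $\mu(d\xi) = \prod_{i=1}^n \mu(d\xi_i)$, so that the integral on the left splits into $2^n$ terms indexed by the subset $A \subseteq \{1,\dots,n\}$ on which $|\xi_i| > N$. Writing $m = |A|$ and $k = n - m$, each term carries $k$ low-frequency indices and $m$ high-frequency ones.

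For the low-frequency indices $j \notin A$ I would bound $e^{-w_j|\xi_j|^2} \le 1$; integrating $\mu(d\xi_j)$ over $\{|\xi_j| \le N\}$ then produces one factor $D_N$ per such index, giving $D_N^k$ overall. The delicate factor is the time integral over $S_{t,n}$ of $\prod_{i \in A} e^{-w_i|\xi_i|^2}$, since the simplex couples all the $w$'s. Here the key step is to integrate out the $k$ low-frequency variables $w_j$ first: for fixed high-frequency variables with $s := \sum_{i \in A} w_i \le t$, the remaining $w_j$ range over a $k$-dimensional simplex of size $t-s$, contributing its volume $(t-s)^k/k!$. I would then bound $(t-s)^k \le t^k$ and enlarge the remaining domain $\{w_i \ge 0,\, i \in A:\ \sum_{i\in A} w_i \le t\}$ to the full orthant $[0,\infty)^m$ (legitimate since the integrand is nonnegative), so that the high-frequency variables decouple and each yields $\int_0^\infty e^{-w_i|\xi_i|^2}\,dw_i = |\xi_i|^{-2}$. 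This produces the factor $\frac{t^k}{k!}\prod_{i\in A}|\xi_i|^{-2}$, which is exactly where the $t^k/k!$ in the statement comes from.

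Integrating $\mu(d\xi_i)/|\xi_i|^2$ over $\{|\xi_i| > N\}$ for each $i \in A$ then gives $C_N^m = C_N^{n-k}$, so the term attached to $A$ is at most $\frac{t^k}{k!} D_N^k C_N^{n-k}$. Finally I would sum over all subsets, grouping by $k$: there are $\binom{n}{k}$ subsets with exactly $k$ low-frequency indices, which assembles the claimed bound $\sum_{k=0}^n \binom{n}{k}\frac{t^k}{k!}D_N^k C_N^{n-k}$. I expect the main obstacle to be precisely the simplex coupling of the $w_i$'s; the decisive idea is to integrate the low-frequency times out \emph{exactly} (yielding the $(t-s)^k/k!$ volume), after which bounding $(t-s)\le t$ and extending the high-frequency integrals to $[0,\infty)$ cleanly decouples everything and simultaneously generates the $\prod|\xi_i|^{-2}$ that pairs with $\mu$ to give $C_N$.
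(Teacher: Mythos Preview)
Your argument is correct: the threshold decomposition in each $\xi_i$, the exact integration of the low-frequency time variables over the reduced simplex to produce $(t-s)^k/k!$, the bound $(t-s)\le t$, and the extension of the high-frequency time integrals to $[0,\infty)$ to generate $|\xi_i|^{-2}$ all go through exactly as you describe. The paper itself does not supply a proof of this lemma; it simply refers to Lemma~3.3 in \cite{HHNT}, and your sketch is precisely the standard argument one finds there, so there is nothing to contrast.
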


%\begin{lemma}\label{lem:combinatorics sum}

The next result is a lower bound for the lower intermittency front, when  $\Lambda$ is bounded below by the Riesz kernel.

\begin{theorem}\label{thm:cor lower}
Let $u(t,x)$ be the solution to equation \eref{SHE} with  nonnegative initial condition $u_0$ being uniformly bounded away from $0$ in the ball $B_{ M}$. 
Assume that
\begin{equation}
\Lambda(x)\geq C_{\Lambda} |x|^{-\beta}, \ \forall |x|\leq R,  \ \text{for some } R >0
\end{equation}
with $0 \leq \beta < 2\wedge d$. Suppose that
\begin{equation}  \label{w2}
\lim_{t \rightarrow \infty} \Gamma_t =\infty.
\end{equation}
Fix $\delta \in (0,1)$ and set  $\eta_t = \Gamma^{\frac{1}{2-\beta}}_{ t\delta^2}$. Define
\begin{equation}  \label{39}
\underline{\nu}(p):= \sup \left \{ \varrho>0:  \limsup _{t \to \infty} \frac{1}{t \eta^2_t} \sup_{{|x|\geq \varrho t \eta_t }} \log E (|u(t,x)|^{p}) > 0 \right\} \,.
\end{equation}
Then we have
\begin{equation}
\underline{\nu}(p) \geq \sqrt{C_{\beta,\delta}} \lambda^{\frac{2}{2-\beta}}  (p-1)^{\frac{ 1}{2-\beta}}\,,
\end{equation}
where
\begin{equation*}
C_{\beta,\delta}=2 \left[\left(\frac{\beta}{2} \right)^{\frac{\beta}{2-\beta}} -  \left(\frac{\beta}{2} \right)^{\frac{2}{2-\beta}}\right]  (1-\delta)^{ \frac 2{2-\beta}} {\delta } j_{\nu}^{\frac{-2\beta}{2-\beta}} C_{\Lambda}^{\frac{2}{2-\beta}}\sqrt{2(1-\delta)} \,,
\end{equation*}
and $j_{\nu}$ denotes the smallest positive zero of the Bessel function $J_{\nu}(x)$ of index $\nu = \frac{d-2}{2}$. 
\end{theorem}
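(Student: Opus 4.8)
Fix $\de\in(0,1)$ and let $\varrho$ be any number strictly below $\sqrt{C_{\be,\de}}\,\la^{2/(2-\be)}(p-1)^{1/(2-\be)}$; it suffices to exhibit points $x=x_t$ with $|x_t|\ge\varrho t\eta_t$ along which $\tfrac1{t\eta_t^2}\log\EE u^p(t,x_t)$ stays bounded below by a positive constant, for then the $\limsup$ in \eref{39} is positive and $\underline\nu(p)\ge\varrho$. Take $x$ in a fixed direction with $|x|=\varrho t\eta_t$. Since $u_0\ge c_0\mathbf{1}_{B_M}$ for some $c_0>0$ and $\Lambda(z)\ge C_\Lambda|z|^{-\be}$ for $|z|\le R$, the moment formula of Theorem \ref{thm:mom} gives
\begin{equation*}
\EE u^p(t,x)\ \ge\ c_0^p\,\EE_B\!\left[\prod_{i=1}^p\mathbf{1}_{\{B_t^i\in B_M-x\}}\exp\!\Big(\la^2\!\!\sum_{1\le i<j\le p}\!\!\int_0^t\!\!\int_0^t\ga(s-r)\Lambda(B_s^i-B_r^j)\,ds\,dr\Big)\right].
\end{equation*}
Now split $[0,t]$ into a travelling phase $[0,\tau_t]$ and a confinement phase $[\tau_t,t]$, with $\tau_t$ and $t-\tau_t$ both of order $t$ (the precise fractions, and the numerical constants appearing below, are tuned to produce $C_{\be,\de}$), and set the confinement radius $\rho_t=\kappa/\eta_t$, where $\kappa>0$ will be optimised at the end; since $\Ga_t\to\infty$ we have $\rho_t\to 0$, so $2\rho_t\le R$ and $\rho_t\le M$ for $t$ large. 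Restrict the Brownian expectation to the event $G$ that each $B^i$ is in $B(-x,\rho_t/2)$ at time $\tau_t$ and stays in $B(-x,\rho_t)$ throughout $[\tau_t,t]$. On $G$ one has $B_t^i\in B(-x,\rho_t)\subset B_M-x$, and for $i\ne j$ and $s,r\in[\tau_t,t]$, $|B_s^i-B_r^j|\le 2\rho_t\le R$, so the double integral is at least $C_\Lambda(2\rho_t)^{-\be}\int_{\tau_t}^t\!\int_{\tau_t}^t\ga(s-r)\,ds\,dr$.

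It remains to bound $P(G)$ and the $\ga$-integral from below. By independence $P(G)=P(G^{(1)})^p$, and by the Markov property $P(G^{(1)})$ is at least $P(B_{\tau_t}\in B(-x,\rho_t/2))$ times $\inf_{|z|\le\rho_t/2}P_z\big(B\text{ stays in }B(0,\rho_t)\text{ on }[0,t-\tau_t]\big)$. The first factor is bounded below by the value of the Gaussian density over the ball, $\gtrsim\rho_t^d\tau_t^{-d/2}\exp\!\big(-(|x|+\rho_t/2)^2/(2\tau_t)\big)$, and since the cross term $|x|\rho_t=\varrho\kappa t$ is only of order $t$, the leading travel cost is $|x|^2/(2\tau_t)$. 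The second factor is governed by the principal Dirichlet eigenvalue of a ball, which for $-\tfrac12\Delta$ on $B(0,\rho)$ equals $j_\nu^2/(2\rho^2)$ with $\nu=\tfrac{d-2}{2}$; since $(t-\tau_t)/\rho_t^2\to\infty$ the lowest eigenvalue dominates, and uniformly over $|z|\le\rho_t/2$ (by Brownian scaling the radius ratio $\tfrac12$ is dimensionless) one gets $P_z(\cdot)\ge c_d\exp\!\big(-\tfrac{j_\nu^2}{2\rho_t^2}(t-\tau_t)\big)$. Finally, using that $\ga$ is even and Fubini, $\int_{\tau_t}^t\!\int_{\tau_t}^t\ga(s-r)\,ds\,dr=2\int_0^{t-\tau_t}\Ga_a\,da$, which by monotonicity of $a\mapsto\Ga_a$ is at least a $\de$-dependent multiple of $(t-\tau_t)\,\Ga_{t\de^2}=(t-\tau_t)\,\eta_t^{2-\be}$ --- it is precisely this last estimate that forces the definition $\eta_t=\Ga_{t\de^2}^{1/(2-\be)}$.

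Assembling these bounds, applying $\tfrac1{t\eta_t^2}\log(\cdot)$, and letting $t\to\infty$ with $|x|=\varrho t\eta_t$, $\rho_t=\kappa/\eta_t$ and $\eta_t^{2-\be}=\Ga_{t\de^2}$, all the logarithmic terms disappear and one is left with
\begin{equation*}
\liminf_{t\to\infty}\frac{1}{t\eta_t^2}\log\EE u^p(t,x)\ \ge\ a_1\,p(p-1)\,\la^2 C_\Lambda\,\kappa^{-\be}\ -\ a_2\,p\,j_\nu^2\,\kappa^{-2}\ -\ a_3\,p\,\varrho^2,
\end{equation*}
with explicit positive constants $a_1,a_2,a_3$ depending only on $\be$ and $\de$ (the first governed by the $\ga$-integral constant and the factor $2^{-\be}$, the others by the lengths of the two phases). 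Dividing by $p$ and maximising $a_1(p-1)\la^2 C_\Lambda\kappa^{-\be}-a_2 j_\nu^2\kappa^{-2}$ over $\kappa>0$: at the optimum the second term equals $\tfrac\be2$ times the first, so the maximum is $\big[(\tfrac\be2)^{\be/(2-\be)}-(\tfrac\be2)^{2/(2-\be)}\big]$ times an explicit power of the coefficients, and collecting everything the $\liminf$ is positive precisely when $\varrho^2<C_{\be,\de}\,\la^{4/(2-\be)}(p-1)^{2/(2-\be)}$, which is the assertion.

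The main difficulty is the sharp confinement (small-ball) estimate together with the $\ga$-integral bound: one needs the exact exponential rate of $P_z(B\in B(0,\rho_t)\text{ on }[0,t-\tau_t])$ with the correct Bessel constant $j_\nu^2/(2\rho_t^2)$ and uniformly over $z$ in a fixed fraction of the ball, paired with the correct elementary lower bound for $\int\!\int\ga$ under only local integrability of $\ga$ and $\Ga_t\to\infty$; it is exactly the interplay of these two, the scaling $\rho_t\sim\eta_t^{-1}$, and the choice of the time split, that makes the three competing exponential rates converge, after division by $t\eta_t^2$, to explicit constants that can be balanced.
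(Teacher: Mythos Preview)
Your strategy is correct and follows essentially the same route as the paper's proof: moment formula, restriction to an event on which the $p$ Brownian motions are confined to a small ball for a fraction $\delta$ of the time (so that $\Lambda(B^i_s-B^j_r)\ge C_\Lambda(2\varepsilon)^{-\beta}$ there), Gaussian travel cost for reaching $B_M-x$, small-ball asymptotics with rate $j_\nu^2/(2\rho^2)$, the elementary lower bound for the $\gamma$-integral producing the factor $\Gamma_{t\delta^2}$, and the final optimization of $A\varepsilon^{-\beta}-B\varepsilon^{-2}$ over the confinement radius. The only structural difference is the order of the two phases: the paper confines first on $[0,t\delta]$ near the Brownian starting point $0$ and then uses the increment $B_t-B_{t\delta}$ on $[t\delta,t]$ for the travel to $B_M-x$, whereas you travel first on $[0,\tau_t]$ and confine second near $-x$; taking $\tau_t=t(1-\delta)$ these are equivalent by stationarity of increments, though the paper's ordering is slightly cleaner in that the small-ball estimate from \cite{Shi} applies directly to a Brownian motion started at $0$, avoiding the extra uniformity over $|z|\le\rho_t/2$ that your version requires.
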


\begin{proof}
Note that using the change of variable $s \rightarrow \frac{u+v}{2}$ and $r \rightarrow \frac{v-u}{2}$ we have 
\begin{eqnarray*}
\int_0^t \int_0^t \gamma(s-r) ds dr
&=&\int_{-t} ^t \int_u^{2t-u}\gamma(u)dv du
=4 \int_0^t \gamma(u)(t-u) du \\
&
\geq&  4 (1- \delta) t \int_0^{t\delta}\gamma(u) du = 4(1-\delta) t \Gamma_{t\delta }\,.
\end{eqnarray*}
Let $u_0(x) \geq C_{u_0} {\bf 1}_{M}(x)$.  Then using the moment formula for the solution $u(t,x)$ as before,
\begin{eqnarray*}
\EE  u^p(t,x) & =& \EE_B \left( \prod_{i=1}^p u_0(x+B_t^i) \exp \left (\lambda^2 \sum_{1 \leq i < j \leq p}   \int_0^t \int_0^t \gamma(s-r) \Lambda(B_s^i-B_r^j) ds dr \right) \right)\\
&\geq & \EE_B \left( \prod_{i=1}^p u_0(x+B_t^i) \exp \left ( \lambda^2 \sum_{1 \leq i < j \leq p}   \int_0^{t\delta} \int_0^{t\delta} \gamma(s-r) \Lambda(B_s^i-B_r^j) ds dr \right) \right) \\
&\geq&C_{u_0}^p P \left (\sup_{1\leq i \leq p}|B^i_t+x|\leq M,  \,  \sup_{0\leq s \leq t \delta, 1\leq i \leq p} |B^i_s|\leq \varepsilon \right)  \\
&& \times \exp \left ( 2\lambda^2  p(p-1)  (1-\delta) { \delta t } \Gamma_{t\delta^2}  C_{\Lambda} |2\varepsilon|^{-\beta}\right)\\
&=&C_{u_0}^p  P \left (|B^0_t+x|\leq M,  \,  \sup_{0\leq s \leq t\delta} |B^0_s|\leq {\varepsilon} \right)^p \\
&&\times  \exp \left (2\lambda^2 p(p-1) (1-\delta){ \delta t }    \Gamma_{ t\delta^2}  C_{\Lambda} |2\varepsilon|^{-\beta}\right)\,,
\end{eqnarray*}
where $B^0_s$ is a standard Brownian motion, $\varepsilon$ is a positive number satisfying $\varepsilon <\frac R2$,  which will be chosen later. In order to  estimate of the above probability, notice that
\begin{eqnarray*}
&&P \left (|B^0_t+x|\leq M, \sup_{0\leq s \leq   t\delta} |B^0_s|\leq \varepsilon \right)\\
&=&\EE \left ( \EE\left \{ {\bf 1}_{\{|B^0_t-B^0_{t\delta }+x+B^0_{t\delta}|\leq M, \,  \sup_{0 \leq s \leq  t\delta}|B^0_s| \leq \varepsilon\}}\Big|\mathcal{G}_{t\delta }\right\}\right)\,,
\end{eqnarray*}
where $\mathcal{G}_t$ is the filtration generated by $\{B^0_s: 0 \leq s \leq t\}$. Then we can choose $\varepsilon \leq \frac{M}{2}$(the specific choice of $\varepsilon$ will be given below) and invoke Lemma \ref{lem:heat kernel int} to get, for $t$ large enough,
\begin{eqnarray*}
&&P \left (|B^0_t+x|\leq  M, \sup_{0\leq s \leq  t\delta } |B^0_s|\leq \varepsilon \right) \geq \EE \left ( \EE\left \{ {\bf 1}_{\{|B^0_t-B^0_{ t\delta}+x|\leq \frac{M}{2}}\Big|\mathcal{G}_{t\delta }\right\} {\bf 1} _{\{\sup_{0 \leq s \leq  t\delta}|B^0_s| \leq \varepsilon\}}\right)\\
&&\qquad \geq P \left\{|B^0_{t(1-\delta) }+x|\leq \frac{M}{2} \right\} P\left \{ \sup_{0 \leq s \leq  1} |B^0_s|\leq \frac{\varepsilon}{\sqrt{t \delta}} \right\} \\
&& \qquad  \geq  C  \frac{\omega_d M^d}{(t(1-\delta))^{\frac{d}{2}}} e^{-\frac{M^2(1+\frac{1}{\varkappa})}{8t(1-\delta)}} e^{-\frac{(\varkappa+1)|x|^2}{2t(1-\delta)}}  e^{-\frac{j_{\nu}^2  t \delta}{2 \varepsilon^2}}\,,
\end{eqnarray*}
where    $C$ is a universal constant. The last inequality follows from the small ball probability estimate (see Theorem 1 in \cite{Shi})
\begin{equation}
P \left \{ \sup_{0 \leq s \leq 1} |B_s| \leq \varepsilon \right\} \sim e^{-\frac{j_{\nu}^2}{2 \varepsilon^2}}, \quad \text{as} \quad \varepsilon \to 0\,.
\end{equation}
Then we obtain
\begin{eqnarray*}
\EE u^p(t,x) &\geq &   (C C_{u_0})^p (t(1-\delta)) ^{-\frac{dp}{2}} \exp\Bigg(-\frac{M^2(1+\frac{1}{\varkappa})}{8t(1-\delta)}-\frac{p (\varkappa +1)|x|^2}{2t(1-\delta)} -\frac{pj_{\nu}^2  t\delta}{2 \varepsilon^2} \\
&& +2  \lambda^2 p(p-1) (1-\delta) { \delta t \Gamma_{t\delta^2 } }C_{\Lambda} |2\varepsilon|^{-\beta} \Bigg)\,.
\end{eqnarray*}
We apply Lemma \ref{lem:max} with
\begin{equation*}
A = 2^{-\beta+1} \lambda^2 p(p-1) (1-\delta) { \delta t \Gamma_{t\delta^2} }C_{\Lambda}\, \quad \text{and} \quad B= \frac{j_{\nu}^2  t  \delta p}{2}
\end{equation*}
to maximize the right hand side of the above inequality, by choosing
\begin{equation*}
\varepsilon = \left ( \frac{{ 2^{\beta-1} j_{\nu}^2 }}{\beta \lambda^2 (p-1)  (1-\delta)\Gamma_{t\delta^2} C_{\Lambda}}\right)^{\frac{1}{2-\beta}}.
\end{equation*}
In this way we obtain
\begin{eqnarray*}
&\EE u^p(t,x) \geq  ( CC_{u_0})^p (t(1-\delta))^{-\frac{dp}{2}} \exp \left (-\frac{M^2(1+\frac{1}{\varkappa})}{8t(1-\delta)}-\frac{p (\varkappa +1)|x|^2}{2t(1-\delta)} + C_{\beta,\delta} \lambda^{\frac{4}{2-\beta}} p (p-1)^{\frac{2}{2-\beta}} t \Gamma_{t\delta^2}^{\frac{2}{2-\beta}} \right)\,.
\end{eqnarray*}
We remark  condition (\ref{w2}) implies that  $\varepsilon$ chosen above tends to $0$ as $t \to \infty$, thus the small ball estimate used above works  for $t$ large enough, in such a way that $\varepsilon <\frac M2$. 
If we want
\begin{eqnarray*}
\limsup_{t \to \infty} \frac{1}{t\eta_t^2}\log \sup_{ { |x|\geq \varrho t\eta_t  }  } \EE u^p(t,x) 
 \geq \frac{-p (\varkappa +1)\varrho^2}{2(1-\delta)} + C_{\beta,\delta} \lambda^{\frac{4}{2-\beta}}  p^{\frac{4-\beta}{2-\beta}} > 0\,.
\end{eqnarray*}
we need
\begin{equation*}
\varrho < \frac{\sqrt{C_{\beta,\Lambda}}}{\sqrt{\varkappa +1}} \lambda^{\frac{2}{2-\beta}}  (p-1)^{\frac{ 1}{2-\beta}}\,,
\end{equation*}
Letting $\varkappa \to 0$ and invoking Lemma \ref{lemma:sol nonnegative} we conclude that
\begin{equation*}
\underline{\nu}(p)\geq \sqrt{C_{\beta,\delta}} \lambda^{\frac{2}{2-\beta}}  (p-1)^{\frac{ 1}{2-\beta}} \,.
\end{equation*}
The theorem is proved.
\end{proof}

The next two lemmas are used in the proof of previous theorems. 
\begin{lemma}\label{lem:heat kernel int}
For any positive $M$ and $\varkappa$ we have 
\begin{equation}
\frac{1}{(2\pi t)^{\frac{d}{2}}} e^{-\frac{(\varkappa+1)|x|^2}{2t}} e^{-\frac{M^2(1+\frac{1}{\varkappa})}{2t}}\omega_d M^d \leq \int_{|y|\leq M} \frac{1}{(2\pi t)^{\frac{d}{2}}} e^{-\frac{|y-x|^2}{2t}} dy \leq \frac{1}{(2\pi t)^{\frac{d}{2}}} e^{-\frac{|x|^2}{2 t (\varkappa +1)}} e^{\frac{M^2}{2 t \varkappa}} \omega_d M^d\,,
\end{equation}
where $\omega_d$ is the volume of the unit ball in $\RR^d$. 
\end{lemma}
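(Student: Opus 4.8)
The plan is to reduce both inequalities to elementary pointwise bounds on $|y-x|^2$ valid for $|y|\le M$, obtained from Young's inequality $2ab\le \epsilon a^2 + \epsilon^{-1} b^2$, and then to integrate over the ball $\{|y|\le M\}$, whose Lebesgue measure is exactly $\omega_d M^d$. Throughout $t>0$, so the prefactor $(2\pi t)^{-d/2}$ and all exponentials are well defined.

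For the right-hand (upper) inequality I would start from the expansion $|y-x|^2 = |x|^2 - 2\langle x,y\rangle + |y|^2$ and estimate $2\langle x,y\rangle \le 2|x|\,|y| \le a|x|^2 + a^{-1}|y|^2$ for a parameter $a\in(0,1)$ to be chosen. This gives $|y-x|^2 \ge (1-a)|x|^2 + (1-a^{-1})|y|^2 \ge (1-a)|x|^2 - (a^{-1}-1)M^2$ on $\{|y|\le M\}$, since $1-a^{-1}<0$. Choosing $a=\varkappa/(\varkappa+1)$ makes $1-a=(\varkappa+1)^{-1}$ and $a^{-1}-1=\varkappa^{-1}$, so that $|y-x|^2 \ge |x|^2/(\varkappa+1) - M^2/\varkappa$ for every $|y|\le M$. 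Substituting this into $e^{-|y-x|^2/(2t)}$, which is then bounded above by the constant $e^{-|x|^2/(2t(\varkappa+1))}e^{M^2/(2t\varkappa)}$, and integrating over $\{|y|\le M\}$ produces the upper bound.

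For the left-hand (lower) inequality I would argue symmetrically: $-2\langle x,y\rangle \le 2|x|\,|y| \le a|x|^2 + a^{-1}|y|^2$ gives $|y-x|^2 \le (1+a)|x|^2 + (1+a^{-1})|y|^2 \le (1+a)|x|^2 + (1+a^{-1})M^2$ on $\{|y|\le M\}$. Taking $a=\varkappa$ yields $|y-x|^2 \le (\varkappa+1)|x|^2 + (1+\varkappa^{-1})M^2$; hence $e^{-|y-x|^2/(2t)}$ is bounded below by $e^{-(\varkappa+1)|x|^2/(2t)}e^{-(1+\varkappa^{-1})M^2/(2t)}$ uniformly in $y$, and integrating over $\{|y|\le M\}$ gives the lower bound.

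I do not expect any genuine analytic obstacle here; the only point requiring care is the bookkeeping of constants, namely picking the Young parameter $a$ in each case so that the resulting coefficients match the $\varkappa$ appearing in the statement (recall that these two choices are linked by the relation $\varkappa/(\varkappa+1)\leftrightarrow\varkappa$, which is exactly what one gets by inserting or omitting the $+|y|^2$ term in the expansion).
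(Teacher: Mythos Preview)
Your proposal is correct and follows essentially the same approach as the paper: both arguments use Young's inequality $2|x||y|\le \varkappa|x|^2+\varkappa^{-1}|y|^2$ to obtain the pointwise bounds $|y-x|^2\le (\varkappa+1)|x|^2+(1+\varkappa^{-1})|y|^2$ and $|y-x|^2\ge |x|^2/(\varkappa+1)-|y|^2/\varkappa$, then integrate the resulting constant bounds over the ball of volume $\omega_d M^d$. The only cosmetic difference is that the paper deduces the second inequality from the first (by writing $|x|^2=|(x-y)+y|^2$), whereas you derive each directly with its own Young parameter; the content is identical.
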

\begin{proof}
We have the simple fact that 
\begin{eqnarray*}
|y-x|^2 \leq |y|^2 + 2 |x||y|+ |x|^2 \leq |y|^2 + \frac{|y|^2}{\varkappa} + \varkappa |x|^2 + |x|^2 = (1+ \frac{1}{\varkappa}) |y|^2 + (\varkappa +1)|x|^2\,,
\end{eqnarray*}
and from this we deduce the reverse inequality 
\begin{equation*}
|x-y|^2 \geq \frac{|x|^2}{\varkappa+1} - \frac{|y|^2}{\varkappa}\,,
\end{equation*}
from these two inequalities the proof is done by elementary calculations. 
\end{proof}

\begin{lemma}\label{lem:max}
Let $A,B > 0$ and $0 < \beta < 2$. Then the function
\begin{equation*}
f(x)= Ax^{-\beta}-Bx^{-2}
\end{equation*}
attains its maximum at $x= \left ( \frac{2B}{\beta A}\right)^{\frac{1}{2-\beta}}$, and the maximum value equals
\[
\left(\left ( \frac{\beta}{2}\right)^{\frac{\beta}{2-\beta}} - \left(\frac{\beta}{2} \right)^{\frac{2}{2-\beta}}\right) A^{\frac{2}{2-\beta}}B^{\frac{-\beta}{2-\beta}}.
\]

\end{lemma}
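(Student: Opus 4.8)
The plan is to treat this as an elementary one-variable optimization on $(0,\infty)$. First I would differentiate, writing
\[
f'(x) = -\beta A x^{-\beta-1} + 2B x^{-3} = x^{-3}\left(2B - \beta A x^{2-\beta}\right).
\]
Since $x^{-3} > 0$ and, because $0 < \beta < 2$, the map $x \mapsto x^{2-\beta}$ is strictly increasing on $(0,\infty)$, the factor $2B - \beta A x^{2-\beta}$ is strictly positive for small $x$, strictly negative for large $x$, and vanishes exactly once, at the point $x_\ast$ determined by $x_\ast^{2-\beta} = \frac{2B}{\beta A}$, i.e. $x_\ast = \left(\frac{2B}{\beta A}\right)^{1/(2-\beta)}$. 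Hence $f$ is strictly increasing on $(0,x_\ast)$ and strictly decreasing on $(x_\ast,\infty)$, so $x_\ast$ is the unique global maximizer on $(0,\infty)$; this monotonicity argument simultaneously establishes both the location of the maximum and its uniqueness, so no separate second-derivative test is needed.

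Next I would substitute $x_\ast$ back into $f$. From $x_\ast^{2-\beta} = \frac{2B}{\beta A}$ we get $x_\ast^{-\beta} = \left(\frac{\beta A}{2B}\right)^{\beta/(2-\beta)}$ and $x_\ast^{-2} = \left(\frac{\beta A}{2B}\right)^{2/(2-\beta)}$. Using the identity $1 + \frac{\beta}{2-\beta} = \frac{2}{2-\beta}$ (equivalently $1 - \frac{2}{2-\beta} = \frac{-\beta}{2-\beta}$) to collapse the powers of $A$ and $B$, one finds
\[
A x_\ast^{-\beta} = \left(\frac{\beta}{2}\right)^{\frac{\beta}{2-\beta}} A^{\frac{2}{2-\beta}} B^{\frac{-\beta}{2-\beta}}, \qquad B x_\ast^{-2} = \left(\frac{\beta}{2}\right)^{\frac{2}{2-\beta}} A^{\frac{2}{2-\beta}} B^{\frac{-\beta}{2-\beta}},
\]
and subtracting yields $f(x_\ast) = \left[\left(\frac{\beta}{2}\right)^{\frac{\beta}{2-\beta}} - \left(\frac{\beta}{2}\right)^{\frac{2}{2-\beta}}\right] A^{\frac{2}{2-\beta}} B^{\frac{-\beta}{2-\beta}}$, which is the claimed formula.

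There is essentially no obstacle here; the only thing requiring care is the bookkeeping of the fractional exponents of $A$, $B$ and $\beta/2$. As a consistency check, note that since $\frac{\beta}{2} \in (0,1)$ and $\frac{\beta}{2-\beta} < \frac{2}{2-\beta}$, the bracketed factor is strictly positive, in agreement with the observation that $f(x) = x^{-2}\big(A x^{2-\beta} - B\big) > 0$ for all sufficiently large $x$.
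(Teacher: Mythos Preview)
Your proof is correct. The paper actually states this lemma without proof, treating it as an elementary calculus exercise; your first-derivative sign analysis and substitution are exactly the natural way to verify it, and the exponent bookkeeping is all right.
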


\begin{remark}
If condition (\ref{w2}) does not hold, that is, the limit is finite $\Gamma_\infty$ (which happens, for instance, if $\gamma$ is a Dirac function), then, we need the following additional condition on $M$:
\begin{equation}\label{restriction on M}
R \wedge M \geq 2\left ( \frac{{  2^{\beta-1} j_{\nu}^2  }}{\beta \lambda^2 (p-1)  (1-\delta)\Gamma_{\infty} C_{\Lambda}}\right)^{\frac{1}{2-\beta}}.
\end{equation}
\end{remark}

\begin{remark}
When $\gamma$ is the Dirac delta function,  the noise $\dot W$ is white in time and correlated in space and Theorems \ref{thm:cor upper} and \ref{thm:cor lower} still hold with $\Gamma_t=  \frac{1}{2}$. The appearance of the functions $\theta_t$ and $\eta_t$ in Theorems \ref{thm:cor upper} and \ref{thm:cor lower} come from the time covariance of the noise. In the case when $\Gamma_t \to \infty$ as $t \to \infty$ (for instance, when $\gamma(t)=|t|^{-\alpha}$ with $0< \alpha < 1$, $\Gamma_t = \frac{t^{1-\alpha}}{1-\alpha}$),  the restriction on $M$ \eref{restriction on M} is not needed. 
\end{remark}

In the case where $\Lambda(x)=|x|^{-\beta}$ with $0 < \beta < 2\wedge d$, we obtain the following more precise result concerning the upper bound of the intermittency front.  In this case, 
  the  function $\theta_t$ defined in Theorem \ref{thm:cor upper} can be replaced by  $\Gamma_t^{\frac 1 {2-\beta}} $. Notice that this function coincides with the function $\eta_t$ in Theorem  \ref{thm:cor lower} except for the factor $\delta^2$. If we let $\delta$ tend to $1$, then the 
lower bound in  (\ref{39})  tends to zero.

\begin{proposition}\label{prop: Riesz upper}
Assume that  $u_0$ is non-negative and supported in the ball  $B_M$. 
Let $\gamma$ be a locally integrable, positive and positive definite function, and $ \Lambda(x)=|x|^{-\beta}$, assume $0 < \beta < 2\wedge d$. Set $\vartheta _t = \Gamma_t^{\frac{1}{2-\beta}}$, where $\Gamma_t$ is defined in \eref{Gamma_t}.  Define 
\begin{equation}\label{nu(p) up}
\bar{\upsilon}(p):= \inf \left \{ \varrho>0:  \limsup _{t \to \infty} \frac{1}{t\vartheta^2_t} \sup_{|x|\geq \varrho t\vartheta_t} \log \EE u^p(t,x) < 0 \right\}\,,
\end{equation}
then 
\begin{equation*}
\bar{\upsilon}(p)\leq 2\sqrt{2} \left( \frac{ \Gamma\left(\frac{d-\beta}{2}\right)\Gamma\left(1-\frac{\beta}{2} \right)}{ \Gamma\left(\frac{d}{2}\right) } \right)^{\frac{1}{2-\beta}} (p-1)^{\frac{1}{2-\beta}} \lambda^{\frac{2}{2-\beta}}\,.
\end{equation*}
\end{proposition}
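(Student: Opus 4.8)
The plan is to rerun the proof of Theorem \ref{thm:cor upper}, but to observe that for the Riesz kernel the relevant space--time integrals can be evaluated \emph{exactly}, which replaces the crude splitting through $C_N,D_N$ (Lemma \ref{lemmma1}) and produces the explicit constant. Exactly as there, from the moment formula \eref{eq:mom}, the bound $u_0\le\|u_0\|_\infty{\bf 1}_{B_M}$, and the Cauchy--Schwarz inequality in the Brownian expectation (separating the $p$ indicator factors from the exponential), one gets for every integer $p\ge2$
\begin{equation*}
\EE u^p(t,x)\ \le\ \|u_0\|_\infty^p\,\big(P(|x+B^1_t|\le M)\big)^{p/2}\,\big(\EE v^p(t,x)\big)^{1/2},
\end{equation*}
where $v$ solves \eref{SHE} with initial condition $1$ and spatial covariance $2\Lambda=2|\cdot|^{-\beta}$; the Gaussian probability is controlled by Lemma \ref{lem:heat kernel int}, contributing the factor $e^{-|x|^2/(2t(\varkappa+1))}$ together with $x$-independent terms polynomial in $t$. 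So everything reduces to a sharp upper bound for $\EE v^p(t,x)$, which by translation invariance does not depend on $x$.

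For that I would use the Wiener chaos expansion $v(t,x)=\sum_{n\ge0}I_n(f_n(\cdot,t,x))$ and hypercontractivity, as in Theorem \ref{thm:cor upper}: $\|v(t,x)\|_{L^p(\Omega)}\le\sum_n(p-1)^{n/2}\sqrt{n!}\,\|f_n(\cdot,t,x)\|_{\mathcal{H}_1^{\otimes n}}$, together with the Fourier bound of \cite{HHNT},
\begin{equation*}
n!\,\|f_n(\cdot,t,x)\|_{\mathcal{H}_1^{\otimes n}}^2\ \le\ \frac{(4\lambda^2\Gamma_t)^n}{(2\pi)^{nd}}\int_{\RR^{nd}}\int_{T_n(t)}\prod_{i=1}^n e^{-(s_{i+1}-s_i)|\xi_i|^2}\,ds\,\mu(d\xi),\qquad s_{n+1}:=t.
\end{equation*}
The new ingredient is that for $\Lambda=|\cdot|^{-\beta}$ one has $\mu(d\xi)=c_{d,\beta}|\xi|^{\beta-d}\,d\xi$ with $c_{d,\beta}=2^{d-\beta}\pi^{d/2}\Gamma(\tfrac{d-\beta}{2})/\Gamma(\tfrac\beta2)$. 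After the substitution $w_i=s_{i+1}-s_i$ the inner $\xi$-integral factorises, each factor being the elementary Gamma integral $\int_{\RR^d}e^{-w|\xi|^2}\mu(d\xi)=2^{d-\beta}\pi^{d}\,\frac{\Gamma((d-\beta)/2)}{\Gamma(d/2)}\,w^{-\beta/2}$, while the remaining integral over $S_{t,n}=\{w_i\ge0,\ w_1+\cdots+w_n\le t\}$ is the Dirichlet integral $\int_{S_{t,n}}\prod_i w_i^{-\beta/2}\,dw=\Gamma(1-\tfrac\beta2)^n\,t^{n(1-\beta/2)}/\Gamma(1+n(1-\tfrac\beta2))$. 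Collecting constants — the factor $(2\pi)^{nd}$ absorbs all powers of $\pi$ and leaves precisely the ratio $\Gamma(\tfrac{d-\beta}{2})\Gamma(1-\tfrac\beta2)/\Gamma(\tfrac d2)$ of the statement — this yields
\begin{equation*}
n!\,\|f_n(\cdot,t,x)\|_{\mathcal{H}_1^{\otimes n}}^2\ \le\ \frac{(c\,\Gamma_t\,t^{\rho})^n}{\Gamma(1+n\rho)},\qquad \rho:=1-\tfrac\beta2,\quad c:=2^{2-\beta}\lambda^2\,\frac{\Gamma(\tfrac{d-\beta}{2})\Gamma(1-\tfrac\beta2)}{\Gamma(\tfrac d2)} .
\end{equation*}

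Hence $\|v(t,x)\|_{L^p(\Omega)}\le\sum_n b^n/\sqrt{\Gamma(1+n\rho)}$ with $b=\big((p-1)c\,\Gamma_t\big)^{1/2}t^{\rho/2}$. This is a Mittag--Leffler-type series: by a Stirling/saddle-point estimate (e.g.\ via the Legendre duplication formula, $\sqrt{\Gamma(1+n\rho)}\asymp 2^{n\rho/2}\Gamma(1+\tfrac{n\rho}{2})$ up to subexponential factors, reducing to the classical asymptotics $E_\alpha(z)\sim\alpha^{-1}e^{z^{1/\alpha}}$) it grows like $\exp(\tfrac12 b^{2/\rho}(1+o(1)))$ as $b\to\infty$. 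Since $b^{2/\rho}=\big((p-1)c\big)^{2/(2-\beta)}\Gamma_t^{2/(2-\beta)}\,t=\big((p-1)c\big)^{2/(2-\beta)}\vartheta_t^2\,t$ and $t\vartheta_t^2\to\infty$ (which also absorbs the subexponential corrections and the polynomial prefactors from Lemma \ref{lem:heat kernel int}), combining with the reduction and taking $|x|=\varrho t\vartheta_t$ gives
\begin{equation*}
\limsup_{t\to\infty}\ \frac{1}{t\vartheta_t^2}\,\sup_{|x|\ge\varrho t\vartheta_t}\log\EE u^p(t,x)\ \le\ \frac p4\,\big((p-1)c\big)^{2/(2-\beta)}-\frac{p\varrho^2}{4(\varkappa+1)} ,
\end{equation*}
which is negative whenever $\varrho>\sqrt{\varkappa+1}\,\big((p-1)c\big)^{1/(2-\beta)}$. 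Since $c^{1/(2-\beta)}=2\,\lambda^{2/(2-\beta)}\big(\Gamma(\tfrac{d-\beta}{2})\Gamma(1-\tfrac\beta2)/\Gamma(\tfrac d2)\big)^{1/(2-\beta)}$, taking $\varkappa=1$ gives exactly the asserted bound on $\bar\upsilon(p)$ (and letting $\varkappa\to0$ would even replace $2\sqrt2$ by $2$).

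The delicate step is the last computation: it is easy to see that $\sum_n b^n/\sqrt{\Gamma(1+n\rho)}$ has exponential order $b^{2/\rho}$, but the constant $\tfrac12$ in front must be pinned down, because together with the value of $c$ and the exponent $2/(2-\beta)$ it is precisely what fixes the coefficient in the bound on $\bar\upsilon(p)$. The secondary point to verify is that all $x$-independent contributions (the polynomial-in-$t$ factors, the term $e^{M^2/(2t\varkappa)}$, and the subexponential tail of the Mittag--Leffler asymptotics) genuinely vanish after division by $t\vartheta_t^2$ — which is where one uses that $\gamma$, being locally integrable, positive and positive definite, makes $\Gamma_t$ positive, nondecreasing and at most polynomially large, so that no condition of the form \eref{w2} is needed.
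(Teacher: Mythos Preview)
Your proof follows essentially the same route as the paper: the Cauchy--Schwarz reduction to $v$ with covariance $2\Lambda$, the chaos expansion plus hypercontractivity, the explicit Fourier/Dirichlet computation for the Riesz kernel yielding $n!\|f_n\|^2_{\mathcal H_1^{\otimes n}}\le (c\Gamma_t t^{\rho})^n/\Gamma(1+n\rho)$ with the same constant $c$, and finally Lemma~\ref{lem:heat kernel int} for the Gaussian probability. The computation of $c$ and the reduction are identical to the paper's.

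The one substantive difference is how you estimate the series $\sum_n b^n/\sqrt{\Gamma(1+n\rho)}$. The paper uses the clean log-convexity inequality $\Gamma(1+x)^2\le\Gamma(1+2x)$, giving $1/\sqrt{\Gamma(1+n\rho)}\le 1/\Gamma(1+n\rho/2)$ and hence the honest Mittag--Leffler bound $E_{\rho/2}(b)\sim (2/\rho)\exp(b^{2/\rho})$; this loses a factor of~$2$ in the exponent but is a genuine inequality. Your Legendre--duplication route gives the sharper growth $\exp(\tfrac12 b^{2/\rho})$, which is why you can match the constant $2\sqrt2$ already at $\varkappa=1$ and would get $2$ with $\varkappa\to0$; but as written it is an asymptotic $\asymp$ rather than an inequality, so to make it rigorous you must check that the subexponential prefactor $(n\rho)^{1/4}$ arising from $\Gamma(\tfrac{1+n\rho}{2})/\Gamma(1+\tfrac{n\rho}{2})$ does not spoil the Mittag--Leffler asymptotics (it does not, but this deserves a line). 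Your final remark that $\Gamma_t$ is ``at most polynomially large'' is stronger than needed: all you use is $t\vartheta_t^2\to\infty$, which follows because $\Gamma_t$ is positive and nondecreasing.
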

\begin{proof}
We will follow the notations and the same calculations used in the proof of Theorem \ref{thm:cor upper}. We have
\begin{equation*}
n!\|f_n(\cdot,t,x)\|^2_{\mathcal{H}_1^{\otimes n}} \leq  \frac{ (4 \lambda^2 \Gamma_t)^n}{(2\pi)^{nd}} \int_{\RR^{nd}} \int_{T_n(t)} \prod_{i=1}^n e^{-(s_{i+1}-s_i)|\xi_i|^2} ds \mu(d\xi)\,. 
\end{equation*}
Since $\Lambda(x)=|x|^{-\beta}$, its Fourier transform is given by   (see, e.g., Chapter 5 in \cite{Ste})
\begin{eqnarray*}
\mu(d\xi) = \frac{\pi^{\frac{d}{2}}2^{d-\beta} \Gamma\left(\frac{d-\beta}{2}\right)}{\Gamma(\frac{\beta}{2})} |\xi|^{\beta-d} d\xi := \Lambda_{\beta} |\xi|^{\beta-d}d\xi\,.
\end{eqnarray*}
Using polar coordinates we can compute the integral 
\begin{equation*}
\int_{\RR^d} e^{-|\eta|^2}|\eta|^{\beta-d} d\eta = \frac{\pi^{\frac{d}{2}}\Gamma(\frac{\beta}{2})}{\Gamma(\frac{d}{2})}\,. 
\end{equation*}
Then with the change of variable $\sqrt{s_{i+1}-s_i} \xi_i \rightarrow \eta_i$, we have 
\begin{eqnarray*}
n!\|f_n(\cdot,t,x)\|^2_{\mathcal{H}_1^{\otimes n}} &\leq&  \frac{ (4 \lambda^2 \Gamma_t \Lambda_{\beta})^n}{(2\pi)^{nd}}\int_{T_n(t)} \int_{\RR^{nd}} \prod_{i=1}^n \left(e^{-|\eta_i|^2} |\eta_i|^{\beta-d}\right) \prod_{i=1}^n (s_{i+1}-s_i)^{-\frac{\beta}{2}}d\eta ds\\
&=& \frac{ (4 \lambda^2 \Gamma_t \Lambda_{\beta} \pi^{\frac{d}{2}})^n}{(2\pi)^{nd} \Gamma(\frac{d}{2})^{n}} \Gamma(\frac{\beta}{2})^n \int_{T_n(t)} \prod_{i=1}^n (s_{i+1}-s_i)^{-\frac{\beta}{2}}  ds\\
&=& \frac{ (4 \lambda^2 \Gamma_t \Lambda_{\beta} \pi^{\frac{d}{2}})^n}{(2\pi)^{nd} \Gamma(\frac{d}{2})^n} \Gamma(\frac{\beta}{2})^n \frac{\Gamma(1-\frac{\beta}{2})^n t^{n (1-\frac{\beta}{2})}}{\Gamma((1-\frac{\beta}{2})n+1)}\,.
\end{eqnarray*}
To alleviate the notation we denote 
\begin{equation*}
B:= \frac{\Lambda_{\beta} \Gamma(\frac{\beta}{2}) \Gamma(1-\frac{\beta}{2})}{ 2^{d-2} \pi^{\frac{d}{2}} \Gamma(\frac{d}{2}) }\,.
\end{equation*}
By the log-convexity of Gamma function, we know that $\Gamma(1+x)^2 \leq \Gamma(1+2x)$. Then we obtain 
\begin{eqnarray*}
\|v(t,x)\|_{L^p(\Omega)}& \leq& \sum_{n=0}^{\infty}  (B(p-1) \lambda^2 \Gamma_t )^{\frac{n}{2}} \frac{t^{n   \frac {2-\beta}4}}{\Gamma((1-\frac{\beta}{2})n+1)^{\frac{1}{2}}}\\
&\leq&\sum_{n=0}^{\infty}  (B(p-1) \lambda^2 \Gamma_t )^{\frac{n}{2}} \frac{t^{n (\frac{1}{2}-\frac{\beta}{4})}}{\Gamma( n \frac {2-\beta} 4 +1)}\,.
\end{eqnarray*}
Using the asymptotic behavior of Mittag-Leffler function (see e.g., page 208 in \cite{EMOT})
\begin{equation*}
\sum_{n=0}^{\infty} \frac{z^n}{\Gamma(1+ a n)} = \frac{1}{a}\exp (z^{\frac{1}{a}}) + O (|z|^{-1})  \quad \text{as} \  z\to \infty \,,
\end{equation*}
and considering the fact that we are only interested in when $t \to \infty$, we obtain
\begin{eqnarray*}
\|v(t,x)\|_{L^p(\Omega)} \leq  \frac 4{2-\beta} \exp \left( (B(p-1)\lambda^2 \Gamma_t)^{\frac{1}{1-\frac{\beta}{2}}} t \right)+  O(t^{-\frac {2-\beta}4 })\,.
\end{eqnarray*} 
Thus, as in the proof of Theorem \ref{thm:cor upper}, we obtain
\begin{eqnarray*}
\EE u^p(t,x) \leq \left(   \frac4 {2-\beta} \right)^{\frac{p}{2}} \exp \left( \frac p2 (B(p-1)\lambda^2 \Gamma_t)^{\frac 2 {2-\beta}  } t - \frac{p |x|^2}{4t (\varkappa +1)}\right) e^{\frac{p M^2}{4t\varkappa}} \omega_d^{\frac p2} M^{\frac {dp} 2}\frac{1}{(2\pi t)^{\frac{dp}{4}}}\,.
\end{eqnarray*}
Recall that we have set $\vartheta _t = \Gamma_t^{\frac{1}{2-\beta}}$. Thus, if we want 
\begin{eqnarray*}
\limsup _{t \to \infty} \frac{1}{t\vartheta^2_t} \sup_{|x|\geq \varrho t\vartheta_t} \log \EE  u^p(t,x)  \leq  p \left( B (p-1) \lambda^2\right)^{\frac{1}{1-\frac{\beta}{2}}}  - \frac{p \varrho^2}{2 (\varkappa +1)} < 0\,,
\end{eqnarray*}
by invoking Lemma \ref{lemma:sol nonnegative} and letting $\varkappa \to 0$, we conclude that 
\begin{equation*}
\bar{\upsilon}(p)\leq \sqrt{2} \left( B (p-1) \lambda^2  \right)^{\frac{1}{2-\beta}}\,.
\end{equation*}
Finally, if we plug in the value of $B$ and $\Lambda_{\beta}$, the proposition is proved. 
%\begin{equation*}
%\bar{\upsilon}(p)\leq 2\sqrt{2} \left(\frac{ \Gamma(\frac{d-\beta}{2}) \Gamma(1-\frac{\beta}{2})}{\Gamma(\frac{d}{2})} \right)^{\frac{1}{2-\beta}} (p-1)^{\frac{1}{2-\beta}} \lambda^{\frac{2}{2-\beta}}\,.
%\end{equation*}

\end{proof}
\begin{remark}
Proposition \ref{prop: Riesz upper}  still holds if we take the fractional kernel $\Lambda(x)=\prod_{i=1}^d |x_i|^{2H_i-2}$ with $\frac{1}{2}< H_i < 1$ for all $i$ and $\beta : = 2d -2\sum_{i=1}^d H_i$ with $0 < \beta < 2$. The order of $p-1$ and $\lambda$ in the upper bound of $\bar{\upsilon}(p)$ will be exactly the same, although the coefficient may be different. 
\end{remark}

Theorem \ref{thm:cor lower} does not cover the case when the noise is white in space. However, if we approximate the Dirac delta function by $p_{\varepsilon}(x)$, we have the following result. 

\begin{proposition}
Assume $d=1$. Let $u(t,x)$ be the solution to equation \eref{SHE} with  nonnegative initial condition $u_0$ being uniformly bounded away from $0$ in the ball $B_{ M}$ and supported in $B_{rM}$, where $r \geq 1$.  
Assume that $\Lambda(x)$ is the Dirac delta function. Set $\vartheta _t = \Gamma_t$, 
fix $\delta \in (0,1)$ and set  $\eta_t = \Gamma_{ t\delta^2}$. Let $\bar{\upsilon}(p) $ and $\underline{\nu}(p)$ be defined in \eref{nu(p) up} and \eref{39}, respectively.  
Then we have
\begin{equation}
\bar{\upsilon}(p) \leq 2\sqrt{2} (p-1) \lambda^2\,.
\end{equation}
If we further assume  \eref{w2} holds, then 
\begin{equation}
 \underline{\nu}(p) \geq \frac{2\sqrt{2}}{e^2 \pi^{\frac{3}{2}}} (1-\delta)^{\frac{3}{2}}\delta^{\frac{1}{2}} (p-1) \lambda^2  \,.
\end{equation}
\end{proposition}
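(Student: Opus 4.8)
The plan is to prove the two inequalities separately, in both cases viewing $\Lambda=\delta_0$ in $d=1$ (so that $\mu(d\xi)=(2\pi)^{-1}d\xi$ is Lebesgue measure) as the formal endpoint $\beta=1$ of the Riesz-kernel results, but carrying out the $\beta=1$ computations directly so as to sidestep the divergent factor $\Gamma(\tfrac{d-\beta}{2})=\Gamma(0)$ that would appear if one literally set $\beta=1$ in Proposition~\ref{prop: Riesz upper} and Theorem~\ref{thm:cor lower}. For the bound on $\bar\upsilon(p)$ I would rerun the chaos-expansion argument in the proof of Proposition~\ref{prop: Riesz upper}, with the Riesz integral $\int_\RR e^{-w\xi^2}\mu(d\xi)$ replaced by the Gaussian integral $\int_\RR e^{-w\xi^2}d\xi=\sqrt{\pi/w}$. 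The simplex integral then becomes the Dirichlet integral $\int_{T_n(t)}\prod_i (s_{i+1}-s_i)^{-1/2}ds=\Gamma(\tfrac12)^n t^{n/2}/\Gamma(1+\tfrac n2)$, so $n!\|f_n(\cdot,t,x)\|^2_{\mathcal H_1^{\otimes n}}\le (2\lambda^2\Gamma_t)^n t^{n/2}/\Gamma(1+\tfrac n2)$; then hypercontractivity, the log-convexity estimate $\Gamma(1+\tfrac n2)\ge\Gamma(1+\tfrac n4)^2$, and the Mittag--Leffler asymptotics already used in Proposition~\ref{prop: Riesz upper} (now with exponent $a=\tfrac14$) sum the chaos series to $\|v(t,x)\|_{L^p(\Omega)}\lesssim \exp(4(p-1)^2\lambda^4\Gamma_t^2 t)$. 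Plugging this into the Cauchy--Schwarz step of Theorem~\ref{thm:cor upper}, using that $u_0$ is bounded with support in $B_{rM}$, and applying Lemma~\ref{lem:heat kernel int} yields $\EE u^p(t,x)\lesssim t^{-p/4}\exp\!\big(2p(p-1)^2\lambda^4\Gamma_t^2 t-\tfrac{p|x|^2}{4t(\varkappa+1)}+\tfrac{p(rM)^2}{4t\varkappa}\big)$ for every $\varkappa>0$; with $\vartheta_t=\Gamma_t$ and $|x|\ge\varrho t\vartheta_t$, the exponent divided by $t\vartheta_t^2$ is at most $2p(p-1)^2\lambda^4-\tfrac{p\varrho^2}{4(\varkappa+1)}+o(1)$, which is negative once $\varrho>2\sqrt2(p-1)\lambda^2\sqrt{\varkappa+1}$, and letting $\varkappa\downarrow0$ gives $\bar\upsilon(p)\le 2\sqrt2(p-1)\lambda^2$.

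For the bound on $\underline\nu(p)$ I would imitate the proof of Theorem~\ref{thm:cor lower}. In the moment formula~\eqref{eq:mom} with $\Lambda=\delta_0$ the double integral is the $\gamma$-weighted mutual local time $L_{ij}=\int_0^t\!\int_0^t\gamma(s-r)\delta_0(B^i_s-B^j_r)\,ds\,dr$, and since this cannot be bounded below pointwise by a Riesz kernel one replaces the pointwise small-ball bound for $\Lambda$ by a mollified one at a matched scale: restricting to the event $A=\{\sup_{k\le p}\sup_{u\le t\delta}|B^k_u|\le\varepsilon\}\cap\{\sup_{k\le p}|B^k_t+x|\le M\}$ forces $|B^i_s-B^j_r|\le 2\varepsilon$ for $s,r\le t\delta$, whence $p_{\varepsilon^2}(B^i_s-B^j_r)\ge p_{\varepsilon^2}(2\varepsilon)=\tfrac{e^{-2}}{\sqrt{2\pi}\,\varepsilon}$, i.e. the situation becomes the $\beta=1$ case of Theorem~\ref{thm:cor lower} with effective coefficient $C_\Lambda=\tfrac{2e^{-2}}{\sqrt{2\pi}}$. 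Using $\int_0^{t\delta}\!\int_0^{t\delta}\gamma(s-r)\,ds\,dr\ge 4(1-\delta)t\delta\,\Gamma_{t\delta^2}$, the small-ball estimate $P(\sup_{[0,1]}|B^0_s|\le\rho)\sim e^{-\pi^2/(8\rho^2)}$ from \cite{Shi} (so $j_\nu=\tfrac\pi2$ for $\nu=-\tfrac12$), and Lemma~\ref{lem:heat kernel int} for the endpoint constraint, one obtains $\EE u^p(t,x)\gtrsim C_{u_0}^p(t(1-\delta))^{-p/2}\exp\big(2\lambda^2p(p-1)(1-\delta)\delta t\Gamma_{t\delta^2}\tfrac{e^{-2}}{\sqrt{2\pi}\,\varepsilon}-\tfrac{p\pi^2 t\delta}{8\varepsilon^2}-\tfrac{p(\varkappa+1)|x|^2}{2t(1-\delta)}+(\mathrm{l.o.t.})\big)$. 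Optimizing over $\varepsilon$ by Lemma~\ref{lem:max} with $\beta=1$ — the maximizer is of order $\Gamma_{t\delta^2}^{-1}$, which $\to0$ under~\eqref{w2}, so the small-ball asymptotics is legitimate for large $t$ with $\varepsilon<M/2$ — makes the $\varepsilon$-dependent terms equal $\tfrac{4e^{-4}}{\pi^3}\lambda^4 p(p-1)^2(1-\delta)^2\delta\,t\Gamma_{t\delta^2}^2$. With $\eta_t=\Gamma_{t\delta^2}$ and $|x|\ge\varrho t\eta_t$, $\tfrac{1}{t\eta_t^2}\log\EE u^p(t,x)\ge \tfrac{4e^{-4}}{\pi^3}\lambda^4 p(p-1)^2(1-\delta)^2\delta-\tfrac{p(\varkappa+1)\varrho^2}{2(1-\delta)}+o(1)$, which is positive whenever $\varrho<\tfrac{2\sqrt2}{e^2\pi^{3/2}}(1-\delta)^{3/2}\delta^{1/2}(p-1)\lambda^2/\sqrt{\varkappa+1}$; letting $\varkappa\downarrow0$ and invoking Lemma~\ref{lemma:sol nonnegative} gives the asserted lower bound.

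The step I expect to be the main obstacle is exactly the replacement of $\delta_0$ by $p_{\varepsilon^2}$ inside~\eqref{eq:mom}: the inequality $p_{\varepsilon^2}(x)\ge e^{-2}/(\sqrt{2\pi}\,\varepsilon)$ on $|x|\le 2\varepsilon$ is only useful while the regularization scale is frozen at $\varepsilon^2$, whereas $L_{ij}$ is by definition the limit as the regularization scale tends to $0$, so one cannot simply pass to the limit in the crude pointwise bound. There seem to be two ways to make this rigorous: keep the regularization frozen at the matched scale $\varepsilon^2$ throughout and invoke a comparison principle for $p$-th moments (monotonicity in the spectral measure $\mu$, using $\mathcal F\delta_0\ge\mathcal F p_{\varepsilon^2}$ as measures, hence $\EE u^p\ge\EE(u^{\varepsilon^2})^p$), or else estimate $\EE_B[\mathbf 1_A L_{ij}]$ directly from the spectral expansion of Brownian motion killed outside $[-\varepsilon,\varepsilon]$ (which in fact produces a larger constant, the matched-scale bound being just enough for the value stated). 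Everything else — the Dirichlet and Mittag--Leffler bookkeeping for the upper bound, and the Laplace-type optimization together with the heat-kernel estimates for the lower bound — amounts to specializing computations already carried out in Theorem~\ref{thm:cor upper}, Proposition~\ref{prop: Riesz upper} and Theorem~\ref{thm:cor lower}.
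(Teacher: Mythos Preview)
Your proposal is correct and follows essentially the same route as the paper. For the upper bound you specialize the chaos-expansion/Mittag--Leffler argument of Proposition~\ref{prop: Riesz upper} to $\mu(d\xi)=d\xi$, exactly as the paper indicates; for the lower bound the paper likewise mollifies $\delta_0$ by a heat kernel $p_\varepsilon$, establishes $\EE u^p(t,x)\ge I_{t,p,\varepsilon}$ by expanding the exponential and comparing term by term in Fourier variables (your ``option~1'': monotonicity in the spectral measure, since $\widehat{\delta_0}\ge \widehat{p_\varepsilon}$), and then restricts to the event $\{\sup_i|B^i_t+x|\le M,\ \sup_i\sup_{s\le t\delta}|B^i_s|\le\sqrt{\varepsilon}\}$---the same matched-scale small-ball event you describe, up to the reparametrization $\varepsilon\leftrightarrow\varepsilon^2$.
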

\begin{proof}
The proof of upper bound follows along the same lines as the proof for proposition \ref{prop: Riesz upper} except now $\mu(d\xi) = d\xi$. For the lower bound, we consider the approximation of the Dirac delta function by the heat kernel $p_{\varepsilon}$, and define 
\begin{equation*}
I_{t,p,\epsilon} = \EE_B\left(\prod_{i=1}^p u_0(x+B_t^i) \exp \left(\lambda^2 \sum_{1 \leq i < j \leq p} \int_0^t \int_0^t \gamma(s-r) p_{\varepsilon}(B_s^i -B_r^j) ds dr  \right) \right)\,.
\end{equation*}
Expanding the exponential and using Fourier analysis, one can show that $\EE u(t,x)^p \geq I_{t,p,\varepsilon}$(see \cite{HHNT, HN}) for any $\varepsilon > 0$. Then the proof follows along the same lines as the proof for Theorem \ref{thm:cor lower} except here, we restrict the expectation on the set 
\begin{equation*}
F := \left \{\omega: \sup_{1 \leq i \leq p} |B_t^i+x| \leq M, \sup_{1\leq i \leq p} \sup_{0 \leq s \leq t\delta} |B_s^{i}| \leq \sqrt{\varepsilon} \right\}\,.
\end{equation*}
We omit the details of the proof. 
\end{proof}
\begin{remark}
If condition \eref{w2} does not hold and the limit is finite $\Gamma_{\infty}$, then we need the following additional condition on $M$:
\begin{equation*}
M \geq \frac{ \sqrt{2}\pi^{\frac{5}{2}} e^2}{ 4 \lambda^2 (p-1) (1-\delta) \Gamma_{\infty}}\,.
\end{equation*}
\end{remark}

\end{document}